\RequirePackage{amsmath}
\documentclass[runningheads]{llncs}
\usepackage{amssymb,amsfonts}
\usepackage{cite}
\usepackage{graphicx}
\usepackage{array}
\usepackage{algorithm}%[section]
\usepackage{algorithmicx}
\usepackage[noend]{algpseudocode}
\usepackage[verbose]{wrapfig}
\usepackage[colorlinks,linkcolor=cyan,citecolor=blue]{hyperref}
\usepackage{multirow}
\usepackage{enumerate}

\usepackage{array}
\usepackage{booktabs}   % For professional-looking tables
\usepackage{diagbox}    % For diagonal split in top-left cell
\usepackage[x11names]{xcolor}      % For coloring cells
\usepackage{color}
\usepackage{colortbl}
\usepackage{adjustbox}

\usepackage{pgfplots}
\usepackage{tikz}

\pgfplotsset{compat=1.17} % Set compatibility

\setlength{\arrayrulewidth}{0.3mm}
\setlength{\tabcolsep}{3pt}
\renewcommand{\arraystretch}{1.3}

\tolerance=800
\newcommand{\lvec}[1]{\overset{{}_{\leftarrow}}{#1}}

\renewcommand{\S}{\mathcal{S}}

\newcommand{\T}{\mathcal{T}}
\newcommand{\ST}{\mathcal{S\!T}}
\newcommand{\R}{\mathcal{DR}}
\newcommand{\F}{\mathcal{DFR}}
\newcommand{\perm}{\mathsf{Perm}}
\newcommand{\rep}{\mathsf{Rep}}

\newcommand{\prim}{\mathsf{Prim}}
\newcommand{\dev}{\mathsf{Dev}}
\newcommand{\kmers}{\mathsf{kmers}}

\begin{document}

\title{Expected Density of Random Minimizers}

\author{Shay Golan\inst{1,2} \and Arseny M. Shur\inst{3}}

\institute{Reichman University, Herzliya, Israel 
\and  University of Haifa, Israel \\\email{golansh1@biu.ac.il} \and
Bar Ilan University, Ramat Gan, Israel\\ \email{shur@datalab.cs.biu.ac.il} }

\maketitle

\vspace*{-4mm}
\begin{abstract}
Minimizer schemes, or just minimizers, are a very important computational primitive in sampling and sketching biological strings.
Assuming a fixed alphabet of size $\sigma$, a minimizer is defined by two integers $k,w\ge2$ and a total order $\rho$ on strings of length $k$ (also called $k$-mers).
A string is processed by a sliding window algorithm that chooses, in each window of length $w+k-1$, its minimal $k$-mer with respect to $\rho$.
A key characteristic of the minimizer is the expected density of chosen $k$-mers among all $k$-mers in a random infinite $\sigma$-ary string.
Random minimizers, in which the order $\rho$ is chosen uniformly at random, are often used in applications.
However, little is known about their expected density $\R_\sigma(k,w)$ besides the fact that it is close to $\frac{2}{w+1}$ unless $w\gg k$.

We first show that $\R_\sigma(k,w)$ can be computed in $O(k\sigma^{k+w})$ time.
Then we attend to the case $w\le k$ and present a formula that allows one to compute $\R_\sigma(k,w)$ in just $O(w\log w)$ time.
Further, we describe the behaviour of $\R_\sigma(k,w)$ in this case, establishing the connection between $\R_\sigma(k,w)$, $\R_\sigma(k+1,w)$, and $\R_\sigma(k,w+1)$.
In particular, we show that $\R_\sigma(k,w)<\frac{2}{w+1}$ (by a tiny margin) unless $w$ is small.
We conclude with some partial results and conjectures for the case $w>k$.
%\vspace*{-2mm}
\keywords{Minimizer \and Random Minimizer \and Expected Density}
%keywords from surveys
\end{abstract}

\section{Introduction}

The study of length-$k$ substrings (\emph{$k$-mers}) of long strings dates back to the conjectures of Golomb \cite{Gol67} and Lempel \cite{Lem71}, proved by Mykkeltveit in 1972 \cite{Mykk72}.
He constructed, for each $\sigma\ge 2$ and $k\ge 2$, a minimum-size \emph{unavoidable} set of $\sigma$-ary $k$-mers; ``unavoidable'' means that every long enough $\sigma$-ary string contains a substring from Mykkeltveit's set.
The interest to $k$-mers boosted in 2000s, when sketching of biological sequences was proposed as an alternative approach to full-text indexing with FM-index \cite{FeMa00} and similar tools.
Minimizers \cite{SWA03, RHHMY04} provide a very simple way to sample $k$-mers  for sketching and are also used in more involved sampling schemes such as syncmers \cite{Edg21}, strobemers \cite{Sah21}, and mod-minimizers \cite{KP24}.
For more information, see \cite{ZKM20,SBBM23,CHM21} and the references therein.

To sample substrings for a sketch of the string $S$, one fixes integer parameters $k$ and $w$ and chooses one $k$-mer in every set of $w$ consecutive $k$-mers in $S$; this process can be viewed as choosing a $k$-mer in each ``window'' of length $(w+k-1)$.
By \emph{markup} of $S$ we mean any map assigning to each window its chosen $k$-mer; we refer to the starting positions of these $k$-mers in $S$ as \emph{marked positions}.
A \emph{(sampling) scheme} is a deterministic algorithm taking a string $S$ and the numbers $k,w$ as the input and computing the markup of $S$.
A scheme must be \emph{local}, which means that the choice of a $k$-mer in a window depends solely on the window as a string.
A \emph{minimizer} is a scheme that fixes a linear order on $k$-mers and chooses the starting position of the minimal $k$-mer in each window, breaking ties to the left (breaking ties to the right instead, one will get ``dual'' minimizers having the same properties as minimizers).

The \emph{density} of a markup is the ratio between the number of marked positions and the length of $S$. 
For a scheme $\S$, let $D_{\S}(n)$ be the expected density of the markup of a uniformly random string of length $n$.
The \emph{density of $\S$} is defined as the limit $D_{\S}=\lim_{n\to\infty}D_{\S}(n)$.
Trivially, $D_\S\ge 1/w$ by the definition of markup.

Below we consider only minimizers.
Given $\sigma$ and $k$, $\S_\rho$ denotes the minimizer that uses the order $\rho$ on $k$-mers (one can view $\rho$ as a permutation of the set of all $\sigma$-ary $k$-mers).
To simplify the notation, we write $D_\rho$ instead of $D_{\S_\rho}$.
If $\rho$ is chosen uniformly at random from the set of all permutations of $\sigma$-ary $k$-mers, $D_\rho$ becomes a random variable.
Its expectation is called the \emph{density of the random order}, denoted by $\R_\sigma(k,w)$.
Note that many schemes \cite{Edg21,Sah21,KP24,ZKM20} make use of minimizers with a (pseudo)random order, and both cases $w\le k$ and $w>k$ are important for applications \cite{li2018minimap2,wood2014kraken,deorowicz2015kmc}.
Typically, a scheme chooses $\rho$ to be the $\le$ order on hash values of $k$-mers for some hash function.
As the density of the chosen order impacts the size of the obtained sample, computing $\R_\sigma(k,w)$ has both theoretical and practical interest.
Schleimer et al. \cite{SWA03} showed that $\R_\sigma(k,w)\approx \frac{2}{w+1}$ if ``most'' windows have no repeated $k$-mers.
Zheng et al. \cite{ZKM20} were more precise: $\R_\sigma(k,w)= \frac{2}{w+1}+o(\frac{1}{w})$ whenever $w<\sigma^{k/(3+\varepsilon)}$, while orders of density $O(\frac{1}{w})$ exist if and only if $w=O(\sigma^k)$.
Still, many natural questions about $\R_\sigma(k,w)$ are open, and we answer some of them in this paper.

We treat $\sigma$ as a constant and focus on the dependence of density on $k$ and $w$.
In Sect.~\ref{s:general}, we describe an algorithm computing $\R_\sigma(k,w)$ in $O(k\sigma^{k+w})$ time.
Then in Sect.~\ref{s:smallw} we analyse the case $w\le k$, presenting our main results.
We prove a \emph{formula} for $\R_\sigma(k,w)$, which can be computed in just $O(w\log w)$ time independent of $k$.
Studying this formula, we describe the connection between $\R_\sigma(k,w)$, $\R_\sigma(k+1,w)$, and $\R_\sigma(k,w+1)$; in particular, we show that $\R_\sigma(k,w)<\frac{2}{w+1}$ for almost all pairs $w,k$ with $w\le k$.
The notion of \emph{major run}, playing the key role in obtaining these results, can be of independent interest.
In Sect.~\ref{s:bigw} we briefly consider the case $w\gg k$, where $\R_\sigma(k,w)$ approaches its infimum $\sigma^{-k}$.
The paper ends with a discussion in Sect.~\ref{s:disc}.

%\vspace*{-2mm}
\paragraph{Notation and definitions.}
In what follows, $\Sigma, \sigma,k$, and $w$ denote, respectively, the alphabet $\{0,\ldots,\sigma{-}1\}$, its size, the length of the marker substrings ($k$-mers) and the number of $k$-mers in a window.
A string $s$ over $\Sigma$ is a sequence of characters $s=s[1]s[2]\cdots s[|s|]$, where $|s|$ denotes the length of $s$.
The \emph{reversal} of $s$ is $\lvec s= s[|s|]\cdots s[2]s[1]$.
If $\le i\le j \le |s|$, we call $s[i..j]=s[i]s[i+1]\ldots s[j]$ a \emph{substring} of $s$. 
It is a \emph{prefix} if $i=1$  and a \emph{suffix} if $j=|s|$.
A \emph{repeat} is a pair of equal substrings in a string.
A \emph{$k$-string} is a string of length $k$; we similarly write \emph{$k$-prefix}, \emph{$k$-suffix}, \emph{$k$-substring}, and \emph{$k$-repeat}.

An integer $p$ is a \emph{period} of $s$ if $s[1..|s|-p]=s[p{+}1..|s|]$.
If moreover $p$ is the minimal period of $s$ and $p\le |s|/2$, then $s$ is called \emph{$p$-periodic}.
A string $s$ is \emph{primitive} if it is \emph{not} $p$-periodic for every divisor $p$ of $|s|$.
A $p$-periodic substring of $s$ is a \emph{run} (in $s$), if it is not a part of a longer $p$-periodic substring of $s$.

For a set $F$ of strings, its \emph{prefix tree} contains all prefixes of strings from $F$ as vertices and all pairs $(u,ua)$, where $a\in\Sigma$, as directed edges.
For a string $s$, its \emph{suffix tree} is the prefix tree of the set of all suffixes of $s$, additionally compressed by replacing each maximal non-branching path by a single edge.

By $\perm(\sigma,k)$ we denote the set of all permutations of all $\sigma$-ary $k$-strings.

\paragraph{Density of minimizers.} 
Let $\S_\rho$ be a minimizer for certain $\sigma,k,w$, and consider computing the markup of a string $S$.
Suppose the scheme processes windows left to right, and in the current window containing a substring $ub$, where $b\in\Sigma$, a new position is marked.
This happens either if the $k$-suffix of $ub$ is its unique minimal $k$-mer (recall that the ties are broken to the left) or if the substring $au$ from the previous window has its $k$-prefix as the minimal $k$-mer.
One can restate this condition as ``the minimal $k$-mer of the substring $aub$ is either its prefix or its unique suffix''.
A $(k+w)$-substring with this property is called a \emph{gamechanger} (also known as \emph{charged context}~\cite{ZKM20}).
One can see that the density $D_\rho$ of $\S_\rho$, defined in the introduction, equals the fraction of gamechangers among all $\sigma$-ary $(k{+}w)$-strings. 
This gives us finite and computationally efficient definition of $D_\rho$.

The \emph{density factor} is the density multiplied by the factor of $(w+1)$.
Such a normalization allows one to compare the density over a range of window sizes.
We write $\F_\sigma(k,w)= (w+1)\R_\sigma(k,w)$.

\section{Computing Expected Density in The General Case}
\label{s:general}

In order to compute the value $\R_\sigma(k,w)$ efficiently, we need a more efficient representation for it.
Let $P_{\sigma,k}(v)$ be the probability that a $(w+k)$-string $v$ is a gamechanger for a randomly chosen order $\rho\in\perm(\sigma,k)$.
We need two lemmas.
\begin{lemma} \label{l:probgc}
    If $v$ contains $t$ distinct $k$-mers, then $P_{\sigma,k}(v)=\frac{2}{t}$ if the $k$-suffix of $v$ has no other occurrences in $v$ and $P_{\sigma,k}(v)=\frac{1}{t}$ otherwise.
\end{lemma}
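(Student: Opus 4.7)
The plan is to build on the rephrasing stated just above the lemma: $v$ is a gamechanger for $\rho$ iff the minimal $k$-mer of $v$ (under $\rho$, with ties broken to the left) is either the $k$-prefix of $v$ (event $A$) or the $k$-suffix of $v$ with no other occurrence of the same $k$-mer value in $v$ (event $B$). I would therefore compute $\Pr[A]$ and $\Pr[B]$ and add them, after checking disjointness.

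First I would reduce to a finite combinatorial model. Since the gamechanger property only depends on the relative $\rho$-ranks of the $k$-mers actually appearing in $v$, and $\rho$ is drawn uniformly from $\perm(\sigma,k)$, the induced relative order on the $t$ distinct $k$-mers of $v$ is a uniformly random permutation of those $t$ strings. Consequently each distinct $k$-mer value is the $\rho$-minimum of this set with probability exactly $1/t$.

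Next I would evaluate $\Pr[A]$ and $\Pr[B]$ separately. Because the $k$-prefix of $v$ is automatically the leftmost occurrence of its own value in $v$, event $A$ is equivalent to ``the distinct value that appears at position $1$ is the $\rho$-minimum of the $t$ values,'' giving $\Pr[A]=1/t$. For event $B$ I would split on whether the $k$-suffix of $v$ has another occurrence in $v$: if it does, $B$ is impossible by definition, so $P_{\sigma,k}(v)=\Pr[A]=1/t$; if it does not, the suffix value is distinct and necessarily different from the prefix value (otherwise the prefix would itself be a second occurrence of the suffix), and $B$ reduces to ``this distinct value is the $\rho$-minimum,'' once again of probability $1/t$.

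Finally I would observe that $A$ and $B$ are disjoint, since $A$ places the $\rho$-minimum at position $1$ whereas $B$ places it uniquely at position $w{+}1$. Adding the two probabilities in the unique-suffix case yields $P_{\sigma,k}(v)=2/t$, matching the statement. I do not foresee any serious obstacle; the only delicate point is the interplay between leftmost tie-breaking and possible coincidences of the prefix $k$-mer with interior $k$-mers, but this is painless because the prefix is by construction the leftmost occurrence of its value, so no such coincidence can move the leftmost-minimum position away from $1$ once the prefix value is chosen as the $\rho$-minimum.
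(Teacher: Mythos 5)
Your proposal is correct and follows essentially the same route as the paper's one-line proof: compute the two probabilities $\Pr[\text{prefix is }\rho\text{-minimum}]=1/t$ and $\Pr[\text{suffix is the unique }\rho\text{-minimum}]$ (which is $1/t$ when the suffix has no second occurrence and $0$ otherwise), then add them. The extra care you take over disjointness of the two events, the restriction of a uniform order to the $t$ values present, and the interaction with leftmost tie-breaking is all sound and simply makes explicit what the paper leaves implicit.
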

\begin{proof}
    As $v$ has $t$ distinct $k$-mers, the probability that the $k$-prefix of $v$ is minimal among these $k$-mers is $\frac{1}{t}$, and the same applies to the $k$-suffix.
    Now the claim follows from the definition of gamechanger. \qed
\end{proof}
\begin{lemma} \label{l:randens}
    One has $\R_\sigma(k,w) = \frac{1}{\sigma^{w+k}}\sum_{v\in\Sigma^{w+k}}P_{\sigma,k}(v)$.
\end{lemma}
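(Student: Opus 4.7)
The plan is to combine the finite combinatorial characterization of $D_\rho$ recalled in the introduction with linearity of expectation. The paper has already established that, for a fixed order $\rho \in \perm(\sigma,k)$, the density $D_\rho$ equals the fraction of $(w{+}k)$-strings over $\Sigma$ that are gamechangers with respect to $\rho$. The definition of $\R_\sigma(k,w)$ is then $\R_\sigma(k,w) = \mathbb{E}_{\rho}[D_\rho]$ where $\rho$ is drawn uniformly from $\perm(\sigma,k)$.

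First I would write out $D_\rho$ explicitly as
\[
D_\rho \;=\; \frac{1}{\sigma^{w+k}}\sum_{v\in\Sigma^{w+k}} \mathbf{1}\bigl[v \text{ is a gamechanger for } \rho\bigr].
\]
Taking expectation over $\rho$ and pushing it inside the finite sum by linearity of expectation gives
\[
\R_\sigma(k,w) \;=\; \frac{1}{\sigma^{w+k}}\sum_{v\in\Sigma^{w+k}} \mathbb{E}_{\rho}\bigl[\mathbf{1}[v \text{ is a gamechanger for } \rho]\bigr].
\]
The inner expectation is, by definition, the probability that $v$ is a gamechanger under a uniformly random $\rho$, which is exactly $P_{\sigma,k}(v)$. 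Substituting yields the desired identity.

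There is no real obstacle here; the statement is essentially a restatement of $\R_\sigma(k,w) = \mathbb{E}_\rho[D_\rho]$ once the gamechanger characterization of $D_\rho$ is in place. The only thing worth double-checking is that $D_\rho$ is truly the finite fraction and not merely a limit: this is guaranteed by the preceding paragraph on the density of minimizers, so the argument is self-contained. The value of the lemma is thus not in the proof itself but in the decomposition it enables, namely reducing the computation of $\R_\sigma(k,w)$ to summing the explicit probabilities $P_{\sigma,k}(v)$ supplied by Lemma~\ref{l:probgc}.
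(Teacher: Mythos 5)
Your proposal is correct and follows essentially the same route as the paper: both write $D_\rho$ as the average over $(w{+}k)$-strings of the indicator of being a gamechanger, then swap the order of averaging over $\rho$ and over $v$. The paper phrases this as a double sum with an explicit indicator $I(\rho,v)$ rather than in expectation notation, but the argument is identical.
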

\begin{proof}
    Given $v\in\Sigma^{w+k}$, $\rho\in\perm(\sigma,k)$, let $I(\rho,v)$ be $1$ if $v$ is a gamechanger according to $\rho$ and $0$ otherwise.
    From definitions, $D_\rho=\frac{1}{\sigma^{w+k}}\sum_{v\in\Sigma^{w+k}}I(\rho,v)$ and $P_{\sigma,k}(v)=\frac{1}{|\perm(\sigma,k)|}\sum_{\rho\in\perm(\sigma,k)} I(\rho,v)$.
    Then    $\R_\sigma(k,w) = 
    \frac{\sum_{\rho\in\perm(\sigma,k)} D_\rho}{|\perm(\sigma,k)|} =     \frac{\sum_{\rho\in\perm(\sigma,k)}\sum_{v\in\Sigma^{w+k}}I(\rho,v)}{|\perm(\sigma,k)|\cdot\sigma^{w+k}} =
    \frac{1}{\sigma^{w+k}}\sum_{v\in\Sigma^{w+k}}P_{\sigma,k}(v)$.\qed
\end{proof}
Lemma~\ref{l:randens} allows one to compute $\R_\sigma(k,w)$ without iterating over a huge set $\perm(\sigma,k)$: indeed, it suffices to compute the probability $P_{\sigma,k}(v)$ for every $(w+k)$-string $v$. 
By Lemma~\ref{l:probgc}, for the string $v$ we need just to count distinct $k$-mers in it.
This can be done in $O(wk)$ time naively or in $O(w+k)$ time using, e.g., the suffix tree of $v$.
The following theorem shows that we can do better, spending just $O(k)$ amortized time per string.

\begin{theorem} \label{t:compute_rand}
    The density $\R_\sigma(k,w)$ can be computed in $O(k\sigma^{w+k})$ time and $O(w+k)$ space.
\end{theorem}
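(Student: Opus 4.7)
The plan is to combine Lemmas~\ref{l:probgc} and~\ref{l:randens}: summing $P_{\sigma,k}(v)$ over all $v\in\Sigma^{w+k}$ reduces, for each $v$, to computing two quantities --- the number $t(v)$ of distinct $k$-mers in $v$ and a single bit indicating whether the $k$-suffix of $v$ is unique in $v$. Naive per-string processing costs $\Omega(w+k)$, so to reach $O(k\sigma^{w+k})$ total time we must amortize work across strings.

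To do so, I would enumerate $\Sigma^{w+k}$ as the leaves of the complete $\sigma$-ary trie of depth $w+k$ and traverse it in DFS order, maintaining the current root-to-node path $v[1..d]$ together with a dictionary $C$ that stores, for every $k$-mer currently appearing in $v[1..d]$, its number of occurrences in $v[1..d]$, and a counter $t$ of the distinct keys of $C$. When descending from depth $d-1$ to $d$ with $d\ge k$, the only $k$-substring whose count changes is $v[d-k+1..d]$, and its count goes up by exactly one; symmetrically, backtracking from depth $d$ removes exactly one occurrence of that same $k$-mer. Each DFS edge at depth $\ge k$ therefore costs one update of $C$ and one arithmetic step for $t$, and at each leaf $P_{\sigma,k}(v)$ is read in $O(1)$ from $t$ and the stored count of $v[w+1..w+k]$.

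For $C$ I would use a chained hash table whose buckets are indexed by the Karp--Rabin hash of the $k$-mer and whose entries carry, besides the multiplicity, one canonical starting position in $v$ that is used to disambiguate hash collisions by an $O(k)$ character comparison. The hash of the newborn $k$-mer is obtained in $O(1)$ from the hash of its left neighbour via a standard rolling formula, and with a uniformly chosen hash function chains have expected length $O(1)$, so each DFS edge at depth $\ge k$ is processed in expected $O(k)$ time; the shallower edges contribute only $O(\sigma^{k-1})$ trivial steps in total. Since at most $w+1$ distinct $k$-mers coexist and the string $v$ together with the array of their rolling hashes occupies $O(w+k)$ cells, the whole structure fits within the claimed $O(w+k)$ space.

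Summing the leaf contributions and dividing by $\sigma^{w+k}$ at the end yields $\R_\sigma(k,w)$ by Lemma~\ref{l:randens}, and the total time is dominated by $\sigma^{w+k}\cdot O(k)=O(k\sigma^{w+k})$, as required. The main technical obstacle is the tight space budget: storing $k$-mer keys explicitly would cost $\Omega(wk)$, so one must represent each $k$-mer by its starting position in $v$ and identify it through rolling hashes, with the actual strings compared only to resolve the rare hash collisions.
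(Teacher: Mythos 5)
Your high-level plan coincides with the paper's: both enumerate $\Sigma^{w+k}$ as leaves of the $\sigma$-ary prefix tree, traverse it by DFS, and incrementally maintain enough information about the current root-to-node path to read off $P_{\sigma,k}(v)$ at every leaf in $O(1)$, observing that one DFS step creates or destroys exactly one $k$-mer occurrence. The crucial difference is the data structure. You maintain a hash table keyed by Karp--Rabin fingerprints, with $k$-mers represented by canonical positions and collisions resolved by $O(k)$ character comparisons. This is correct (the invariant that a surviving entry's canonical first-occurrence position stays valid under end-truncation is the right one to maintain), it fits in $O(w+k)$ space, and it gives $O(k)$ per edge --- but only in \emph{expectation} over the random hash choice. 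The paper instead maintains a truncated Weiner suffix tree of the reversed current string, cut off at depth $k$, storing two back-links and a count per level so that each descent and its reversal cost worst-case $O(k)$. That yields a deterministic worst-case bound, which is what the theorem is implicitly claiming; your construction delivers a Las Vegas version of it. Interestingly, the paper's own remark after the theorem sketches essentially your dictionary-based alternative as a practical simplification (treating $k$-mers as integers when they fit in a machine word), so your instinct is well placed; you just trade the paper's combinatorial determinism for randomized hashing, which buys implementation simplicity at the cost of a weaker (expected-time) guarantee.
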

\begin{proof}
    Let $\kmers(v)$ be the number of distinct $k$-mers in the string $v$.
    The idea of fast computation of $\kmers(v)$ for all $(k+w)$-strings is as follows.

    Let $\T$ be the prefix tree of the set $\Sigma^{w+k}$.
    We perform the depth-first traversal of $\T$, maintaining the suffix tree of the current node.
    Descending from a node $u$ to its child $ua$, we update the suffix tree $\ST(u)$ to $\ST(ua)$ and in the process see whether the $k$-suffix of $ua$ is a substring of $u$.
    Thus we obtain the number $\kmers(ua)$ from $\kmers(u)$.
    When ascending back from $ua$ to $u$, we revert the changes, restoring $\ST(u)$.
    Traversing the whole tree $\T$, we get $\kmers(v)$ for each leaf $v$, which is exactly what we need.
    If restoring $\ST(u)$ from $\ST(ua)$ is not slower than updating $\ST(u)$ to $\ST(ua)$, then the time complexity of our scheme is $O(t\sigma^{w+k})$, where $t$ is the worst-case time for one iteration of the suffix tree algorithm.
    We achieve $t=k$ with a version of  Weiner's algorithm \cite{Weiner73}. 
    The details are as follows.

    Building $\ST(u)$, Weiner's algorithm processes suffixes of $u$ in the order of increasing length, thus effectively reading $u$ right to left.    
    As the depth-first traversal of $\T$ corresponds to reading strings left to right, we build suffix trees of reversed strings (this suits our purposes as $\kmers(u)=\kmers(\lvec u)$).
    That is, we maintain the tree $\ST(u)$ when the current node is $\lvec u$  and update it to $\ST(au)$ on descent from $\lvec u$ to $\lvec ua$.

    In order to build $\ST(au)$ from $\ST(u)$, Weiner's algorithm finds the longest prefix $u'$ of $u$ such that $au'$ is a substring of $u$ and then adds the new leaf $au$ as a child of $au'$ (creating a node for $au'$ if it is located in the middle of an edge of $\ST(u)$).
    Note that $\kmers(au)=\kmers(u)$ if $|au'|\ge k$ and $\kmers(au)=\kmers(u)+1$ otherwise.
    After adding the new leaf, a constant number of links is added/updated (assuming a constant-size alphabet, as in our case).
    It suffices to store the links to the nodes $au'$ and $u'$ to revert the changes in constant time.
    Therefore, all information we need to store during the traversal of $\T$ is the following.
    If $\lvec u$ is the current node in $\T$, then for each prefix $x$ of $\lvec u$, including $\lvec u$ itself, we store $\kmers(x)$ and the links to two nodes in $\ST(\lvec x)$.
    Hence the overall memory usage is $O(w+k)$, as required.
    Now consider the running time.
    
    Over a constant-size alphabet, Weiner's algorithm has worst-case running time $t$ for one iteration, where $t$ is the depth of the current tree (and the total running time is linear).
    As we need $t\le k$ to prove the time bound stated in the theorem, we implement one last trick: instead of building the exact suffix tree, we construct its ``truncated'' version, cutting off all nodes $u$ with $|u|>k$.
    This can be done by a simple modification of Weiner's algorithm: for each suffix of length $>k$ we add to the tree not the suffix itself, but its $k$-prefix. 
    Such an iteration ends either in a newly created leaf (and then we have a new $k$-mer) or in already existing leaf (no new $k$-mer).
    The procedure during an iteration is the same as in Weiner's algorithm, with the last visited leaf being the starting point of the search.
    With this modification, we get the tree of depth bounded by $k$ and finally claim the time bound of $O(k\sigma^{w+k})$ for the presented algorithm. \qed
\end{proof}

\begin{remark}   
    The formula from Lemma~\ref{l:randens} can be used to compute the expected density only for small values of $k$ and $w$.
    For such values, $(k+w)$-strings can be viewed as $\sigma$-ary numbers of $(k+w)$ digits.
    Then it is faster to stick to computation over integers and avoid string operations and data structures.
    Instead of the suffix tree, a dictionary of $k$-mers of the current node can be maintained.
    Then the traversal of the prefix tree $\T$ can be organized in $O(1)$ amortized time per node (counting dictionary operations as $O(1)$).
\end{remark}

\section{The Case $w\le k$}
\label{s:smallw}

The expected density of a random minimizer depends on windows with $k$-repeats, as every window $v$, in which all $(w+1)$ $k$-mers are distinct, satisfies $P_{\sigma,k}(v)=\frac{2}{w+1}$.
If $w\le k$, then equal $k$-mers in a window necessarily overlap or touch, and thus create a periodic substring.
This simple observation has very strong implications; in particular, it leads to a formula for $\R_\sigma(k,w)$ and to a deep understanding of the behaviour of this function.

We start with combinatorial lemmas describing the mutual location of all repeated $k$-mers in a window.
Let $x$ be a $p$-periodic run in a string $v$.
We call $x$ a \emph{major run}, if $|x|\ge |v|/2 + p$.
\begin{lemma} \label{l:major}
    A string contains at most one major run.
\end{lemma}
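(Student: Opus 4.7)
The plan is to argue by contradiction: suppose $v$ contains two distinct major runs $x_1$ and $x_2$ with minimal periods $p_1 \le p_2$ respectively. By the definition of a major run, $|x_1|+|x_2| \ge (|v|/2 + p_1) + (|v|/2 + p_2) = |v| + p_1 + p_2$. Since both are substrings of $v$, this immediately forces them to overlap in a common substring $y$ of length at least $p_1 + p_2$.

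If $p_1 = p_2 =: p$, then $y$ is a $p$-periodic substring of $v$ that lies inside both $x_1$ and $x_2$. The maximal $p$-periodic substring of $v$ containing $y$ is unique, so $x_1 = x_2$ as runs, contradicting our assumption. The nontrivial case is therefore $p_1 < p_2$. Here I would invoke the Fine--Wilf theorem: the string $y$ has periods $p_1$ and $p_2$ and length at least $p_1 + p_2 \ge p_1 + p_2 - \gcd(p_1,p_2)$, so $y$ has period $d = \gcd(p_1,p_2)$, with $d \le p_1 < p_2$.

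The key step, and the one that requires care, is lifting the period $d$ of the substring $y$ to a period of the whole run $x_2$, which is where the contradiction will ultimately come from. Since $|y| \ge p_1 + p_2 \ge p_2$, inside $y$ I can pick a window $y' = x_2[a..a{+}p_2{-}1]$ of exactly $p_2$ consecutive characters; $y'$ inherits period $d$ from $y$, and $d$ divides $p_2$. Because $x_2$ has period $p_2$, every character of $x_2$ obeys $x_2[i] = y'[1 + ((i-a) \bmod p_2)]$. Combining this with periodicity $d$ of $y'$ and the fact that $d \mid p_2$ gives $x_2[i] = y'[1 + ((i-a) \bmod d)]$, so $x_2$ itself has period $d < p_2$.

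This contradicts the minimality of $p_2$ as a period of $x_2$ and completes the argument. The only delicate point is the extension-of-period step in the last paragraph; Fine--Wilf alone gives period $d$ on $y$, but one must then exploit that $|y| \ge p_2$ and $d \mid p_2$ to transfer this period to the entire run, rather than applying Fine--Wilf directly to $x_2$ (whose length need not exceed $p_1+p_2$).
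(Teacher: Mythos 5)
Your proof is correct and follows essentially the same route as the paper's: obtain the overlap $y$ of length at least $p_1+p_2$, split on whether $p_1=p_2$ (where the union of the two runs would be a longer $p$-periodic substring, contradicting maximality) or $p_1\ne p_2$ (Fine--Wilf gives $y$ period $d=\gcd(p_1,p_2)$, then lift $d$ to the whole run to contradict minimality of the larger period). The paper lifts the period through a full $p_1$-window that is an integer power of a $d$-block, which is the same mechanism you use with a $p_2$-window; your version is slightly more economical in that a single lift to $x_2$ already yields the contradiction, whereas the paper lifts to both runs before concluding.
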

\begin{proof}
    Aiming at a contradiction, assume that $v$ has $\rho_1$-periodic major run $x_1$ and $\rho_2$-periodic major run $x_2$.
    As their total length is at least $|v|+p_1+p_2$, they overlap in $v$ by a substring $y$ of length at least $p_1+p_2$.
    If $p_1=p_2$, then $x_1$ and $x_2$ form a $p_1$-periodic substring that contains both of them; this contradicts the definition of run.
    If $p_1\ne p_2$, then $y$ has periods $p_1$ and $p_2$ and then has the period $p=\gcd(p_1,p_2)$ by the Fine--Wilf periodicity lemma \cite{FW65}.
    Since $p$ divides $p_1$, the run $x_1$ has a $p_1$-substring that is an integer power of a $p$-substring.
    Hence $x_1$ is $p$-periodic.
    By the same argument, $x_2$ is also $p$-periodic.
    As $p<\max\{p_1,p_2\}$, we get a contradiction with our assumption on the periods of $x_1$ and $x_2$. \qed
\end{proof}
\begin{lemma} \label{l:repeated_kmers}
    Let $k\ge w$ and let a $(w+k)$-string $v$ have a repeated $k$-mer.
    Then
    \begin{enumerate}[(i)]
        \item $v$ has a major run of length at least $p+k$, where $p$ is its period; \label{stmt1}
        \item all occurrences of repeated $k$-mers in $v$ are inside the major run; \label{stmt2}
        \item $v$ has $w-i$ distinct $k$-mers, where the major run is $p$-periodic and has length $p+k+i$. \label{stmt3}
    \end{enumerate}
\end{lemma}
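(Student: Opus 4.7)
The plan is to establish the three parts in order, as each feeds into the next.

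For (i), I would fix any repeated $k$-mer of $v$ with two occurrences at positions $\alpha<\beta$, and consider the substring $x=v[\alpha..\beta{+}k{-}1]$, of length $p_0+k$ where $p_0=\beta-\alpha$. By construction $x$ has period $p_0$, and since both occurrences fit inside $v$ we have $p_0\le w\le k$, hence $p_0\le|x|/2$. Let $p$ denote the minimal period of $x$; then $x$ is $p$-periodic and extends to a maximal $p$-periodic run $R\subseteq v$. A Fine--Wilf argument (essentially the one used inside the proof of Lemma~\ref{l:major}) gives $p\mid p_0$, so $p\le p_0$ and $|R|\ge|x|=p_0+k\ge p+k$. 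Finally, $R$ is major since $|R|\ge p+k\ge p+(w+k)/2=|v|/2+p$, where the last inequality uses $k\ge w$.

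For (ii), I would apply Lemma~\ref{l:major} to get uniqueness of the major run. Each occurrence of a repeated $k$-mer produces, by the argument of (i), a periodic substring that extends to a major run; by uniqueness this extension must coincide with the $R$ of (i) in every case. Hence all occurrences of repeated $k$-mers in $v$ lie inside $R$.

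For (iii), suppose $R$ has period $p$ and length $p+k+i$. I would use the standard fact that a $p$-periodic string of length at least $p+k$ contains exactly $p$ distinct $k$-mers (the $p$ cyclic rotations of the period; two $k$-mers inside the run coincide iff their starting positions are congruent $\pmod p$, again via Fine--Wilf). The $p+i+1$ starting positions at which a $k$-mer fits entirely inside $R$ thus contribute exactly $p$ distinct contents. The remaining $(w+1)-(p+i+1)=w-p-i$ starting positions lie outside $R$; by (ii) each such $k$-mer occurs uniquely in $v$ and its content differs from every $p$-periodic $k$-mer of $R$ (otherwise we would have a repeated $k$-mer with an occurrence outside $R$, contradicting (ii)). Summing gives $p+(w-p-i)=w-i$ distinct $k$-mers.

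The main obstacle I expect is the period bookkeeping in (i): one must carefully distinguish the period $p_0$ induced directly by the $k$-mer repeat from the possibly smaller minimal period $p$ of the resulting run, and verify $p\mid p_0$ so the length bound $|R|\ge p_0+k$ can be stated cleanly as $|R|\ge p+k$. The hypothesis $w\le k$ is used in exactly one spot — the final inequality of (i) promoting the run to a major run — and this is what gives the lemma its sharpness; the rest of the argument is routine counting plus Lemma~\ref{l:major}.
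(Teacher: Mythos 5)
Your proof is correct and follows essentially the same route as the paper's: take two occurrences of a repeated $k$-mer, note they overlap since $k \ge w$, extend the resulting periodic factor to a run, use $k \ge w$ to certify it is major, invoke Lemma~\ref{l:major} for uniqueness in part (ii), and count $p$ distinct $k$-mers inside the run plus $w-p-i$ outside for part (iii). The only difference is presentational: you make the Fine--Wilf bookkeeping (the divisibility $p \mid p_0$ and the inequality $p+k \ge |v|/2+p$) explicit where the paper leaves it implicit.
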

\begin{proof}
    \eqref{stmt1} Suppose that a $k$-mer $u$ occurs in $v$ at positions $i$ and $j>i$.
    Since $k\ge |v|/2$, these two occurrences either overlap or touch, and thus $v[i..j+k-1]$ is a $p$-periodic substring of length $j-i+k$, where $p\le j-i$ (the case $p<j-i$ takes place if there is a third occurrence of $u$ between the two considered). 
    This substring can be extended to a $p$-periodic run in $v$; the length of this run is at least $p+k$, so it is major by definition.

    \eqref{stmt2} The above procedure (start with any two occurrences of one $k$-mer and extend the obtained periodic substring to a run) results in a major run, and this run is unique by Lemma~\ref{l:major}.
    
    \eqref{stmt3} The word $v$ has $w+1$ $k$-mers in total.
    By \eqref{stmt2}, only the $k$-mers inside its major run $x$ can repeat.
    By definition of $p$-periodic, $p$ is the minimum period of $x$, so $x$ contains exactly $p$ distinct $k$-mers among its total of $p+i+1$ $k$-mers.
    The statement now follows. \qed
\end{proof}

\subsection{The Formula for $\R_\sigma(k,w)$}

Let $\rep_{\sigma,k,w}$ be the set of all $\sigma$-ary $(k+w)$-strings with $k$-repeats.
By Lemma~\ref{l:probgc}, the probability $P_{\sigma,k}(v)$ equals $\frac{2}{w+1}$ for every $v\in\Sigma^{w+k}\setminus \rep_{\sigma,k,w}$.
Then by Lemma~\ref{l:randens} we have
\begin{equation} \label{e:R}
    \R_\sigma(k,w) = \frac{2}{w+1} + \frac{1}{\sigma^{w+k}}\underbrace{\Bigg(\sum\nolimits_{v\in\rep_{\sigma,k,w}}P_{\sigma,k}(v) - \frac{2}{w+1}\big|\rep_{\sigma,k,w}\big|\Bigg)}_{\dev_\sigma(k,w)}
\end{equation}
The expression in parentheses in \eqref{e:R} shows how far is the density from the value $\frac{2}{w+1}$.
We refer to this expression as \emph{deviation} (of the random order) and denote it by $\dev_\sigma(k,w)$.
Our aim is to design, in the case $w\le k$, a formula for $\dev_\sigma(k,w)$ and thus for $\R_\sigma(k,w)$.
Let $\prim_\sigma(n)$ denote the number of $\sigma$-ary primitive words of length $n$.

\begin{lemma} \label{l:rep}
    If $w\le k$, then
\begin{equation} \label{e:Rep}
    \big|\rep_{\sigma,k,w}\big| = \sum\nolimits_{p=1}^w \prim_\sigma(p)\sigma^{w-p}\big(w-p+1-\tfrac{w-p}{\sigma}\big)
\end{equation}
\end{lemma}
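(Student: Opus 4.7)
The plan is to partition $\rep_{\sigma,k,w}$ by the period $p$ of the major run guaranteed by Lemma~\ref{l:repeated_kmers}(i). By Lemma~\ref{l:major} each string has at most one major run, so this is a genuine partition; moreover, the major run has length at least $p+k$, which forces $p\le w$ inside a $(w+k)$-string. Conversely, any $p$-periodic substring of length $\ge p+k$ inside a $(w+k)$-string has length $\ge (w+k)/2+p$ (because $w\le k$), hence is major. Thus the task reduces to counting, for each $p\in\{1,\ldots,w\}$, the number $N_p$ of $(w+k)$-strings having a maximal $p$-periodic substring of length at least $p+k$, and verifying that $N_p$ equals the $p$-th summand of \eqref{e:Rep}.

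To count $N_p$, I would parametrize such a $v$ by a triple (run content, run location, outside characters). The run content is encoded by a primitive word $u$ of length $p$ (the first $p$ letters of the run; primitivity is essential so that the minimum period is really $p$, yielding $\prim_\sigma(p)$ choices) together with the run length $\ell\in\{p+k,\ldots,w+k\}$; once these are fixed, the run itself is the length-$\ell$ prefix of the periodic word with period $u$. The location is the starting position $s\in\{1,\ldots,w+k-\ell+1\}$. The $m:=w+k-\ell$ positions outside the run are filled freely, subject only to the maximality of the run: the character immediately adjacent to the run on each side (if it exists) must not continue the period, so it has $\sigma-1$ choices, while every other outside position has $\sigma$ choices.

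A short case analysis (whether the run abuts the left end, the right end, both, or neither) and summation over $s$ yields a per-primitive contribution of $1$ for $m=0$ and $2(\sigma-1)\sigma^{m-1}+(m-1)(\sigma-1)^2\sigma^{m-2}$ for $m\ge 1$. Summing over $m\in\{0,\ldots,w-p\}$ and applying the standard differentiated-geometric-series identity collapses this to $\sigma^{w-p}(w-p+1)-\sigma^{w-p-1}(w-p)$, which equals the claimed expression $\sigma^{w-p}(w-p+1-\frac{w-p}{\sigma})$; multiplying by $\prim_\sigma(p)$ and summing over $p$ gives~\eqref{e:Rep}. The main obstacle will be the bookkeeping that makes this a bijection: one must verify that each $v\in\rep_{\sigma,k,w}$ is recovered exactly once, which relies on the uniqueness of the major run (Lemma~\ref{l:major}) and on the primitivity of $u$. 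The geometric-series simplification is then routine.
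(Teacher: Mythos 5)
Your proof is correct and the final formula comes out right, but your counting step differs from the paper's. Both proofs partition $\rep_{\sigma,k,w}$ by the period $p$ of the unique major run (Lemma~\ref{l:major}, Lemma~\ref{l:repeated_kmers}(i)), and both use primitivity of a length-$p$ word to obtain $\prim_\sigma(p)$. Where you diverge is in how to count the strings with a given $p$. You parametrize by (primitive word $u$, run length $\ell$, starting position $s$, outside characters) and then sum over both $\ell$ and $s$, which forces a case analysis on whether the run touches each end and a differentiated-geometric-series collapse at the end. The paper instead decomposes $v=v_1v_2v_3v_4$ where $v_2v_3$ is the \emph{length-$(k+p)$ suffix} of the major run ($|v_2|=k$, $|v_3|=p$). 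This fixes only the position where the run ends; however far the run actually extends to the left is irrelevant because $v_1$ is completely unconstrained. That eliminates the sum over run lengths entirely and leaves just two cases ($|v_4|=0$ vs.\ $|v_4|>0$), with a single $\sigma-1$ factor for $v_4[1]\neq v_3[1]$ in the second case. So the paper's route buys a much shorter calculation at the price of the slightly less obvious observation that one may fix the run's suffix rather than its full extent; your route is more mechanical but requires the closed-form summation $\sum_m\bigl[2(\sigma-1)\sigma^{m-1}+(m-1)(\sigma-1)^2\sigma^{m-2}\bigr]$, which you handle correctly. Both are valid; yours would need the bookkeeping you flagged (bijectivity of your parametrization, handled by uniqueness of the major run and primitivity of $u$) written out, whereas the paper's uniqueness claim is nearly immediate from the decomposition.
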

\begin{proof}
    By Lemma~\ref{l:repeated_kmers}\eqref{stmt1}, every string $v\in\rep_{\sigma,k,w}$ contains a major run, say $v'$ of period $q$, and $|v'|\ge q+k$. 
    On the other hand, if a $(k+w)$-string $v$ contains a $p$-periodic substring of length $p+k$, then the $k$-prefix and $k$-suffix of this substring are equal, and hence $v\in\rep_{\sigma,k,w}$.
    Therefore, to prove the lemma we need to count, for each period $p=1,\ldots,w$, the number of $(k+w)$-strings containing a $p$-periodic run of length $\ge p+k$.
    Each such string $v$ can be uniquely represented as $v = v_1v_2v_3v_4$, where $|v_2|=k$, $|v_3|=p$, and $v_2v_3$ is a suffix of the major run.
    Note that $v_3$ is primitive (otherwise, the run would have a smaller period) and $v_2$ is uniquely determined by $v_3$ due to periodicity.
    Further, there is no restrictions on $v_1$; if $|v_4|=0$, we have
    $\sigma^{w-p}$ options for $v_1$, to the total of 
    $\prim_\sigma(p)\cdot\sigma^{w-p}$ options for $v$.
    Now let $|v_4|>0$.
    As the major run ends with $v_3$, one has $v_3[1]\ne v_4[1]$; there are no other restrictions for $v_4$.
    Since the number of options for a non-zero length of $v_4$ is $w-p$, we have $\prim_\sigma(p)\cdot(\sigma-1)\sigma^{w-p-1}(w-p)$ options for $v$ with nonempty $v_4$.
    Adding the numbers obtained for empty and nonempty $v_4$, we obtain exactly the term for $p$ in \eqref{e:Rep}. \qed
\end{proof}

\begin{lemma} \label{l:dev}
    If $w\le k$, then
\begin{multline} \label{e:sumprob}
    \sum_{v\in\rep_{\sigma,k,w}}\!\!\!P_{\sigma,k}(v) =  \\
    \sum_{t=1}^w \tfrac{1}{t}{\cdot}\Big(\prim_\sigma(t)+\sum_{p=1}^{t-1}\prim_\sigma(p)\sigma^{t-p}{\cdot}\Big(2t-2p+1-\tfrac{4t-4p-1}{\sigma}+ \tfrac{2t-2p-2}{\sigma^2}\Big)\Big)
\end{multline}
\end{lemma}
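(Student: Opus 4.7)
The plan is to refine the enumeration of $\rep_{\sigma,k,w}$ from Lemma~\ref{l:rep} by tracking two extra parameters on each $v = v_1 v_2 v_3 v_4$: the overhang $i \ge 0$ by which the major run extends past $v_2 v_3$ into $v_1$ (so the run is $p$-periodic of length $p + k + i$), and $j = |v_4|$. By Lemma~\ref{l:repeated_kmers}\eqref{stmt3}, the number of distinct $k$-mers in $v$ equals $t = w - i$, so reorganizing the sum first by $t$ and then by $p$ will match the outer shape of \eqref{e:sumprob}.

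The crucial observation for splitting $P_{\sigma,k}(v)$ between $1/t$ and $2/t$ (Lemma~\ref{l:probgc}) is the position of the $k$-suffix of $v$ relative to the major run. Since the run ends at position $w + k - j$ of $v$, the $k$-suffix lies entirely inside the run iff $j = 0$; in that case, periodicity forces a second occurrence at position $w + 1 - p$ (still inside the run, as the run length is $\ge p + k$), so $P_{\sigma,k}(v) = 1/t$. When $j > 0$, Lemma~\ref{l:repeated_kmers}\eqref{stmt2} forbids the $k$-suffix from being duplicated anywhere in $v$, so $P_{\sigma,k}(v) = 2/t$.

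For each triple $(p, i, j)$ with $p \ge 1$, $i, j \ge 0$, $p + i + j \le w$, I would count the contributing strings: $\prim_\sigma(p)$ choices for the primitive block $v_3$, with $v_2$ forced by periodicity; for $v_4$, either $1$ option if $j = 0$ or $(\sigma-1)\sigma^{j-1}$ if $j > 0$ (as in Lemma~\ref{l:rep}); and symmetrically for $v_1$ of length $\ell = w - p - j$, either $1$ option if $\ell = i$ or $(\sigma-1)\sigma^{\ell - i - 1}$ if $\ell > i$, a ``breaking'' character terminating the run on the left. Multiplying by $P_{\sigma,k}(v)$, fixing $t = w - i$, and summing over $j \in \{0, 1, \ldots, t - p\}$, the case $p = t$ produces the isolated $\prim_\sigma(t)/t$ term (the run fills $v$, forcing $j = 0$ and $|v_1| = i$), while for $p < t$, setting $s := t - p$, the three sub-regimes $j = 0$, $1 \le j \le s - 1$, and $j = s$ combine into
\[
\tfrac{\prim_\sigma(p)}{t}\bigl[\,3(\sigma-1)\sigma^{s-1} + 2(s-1)(\sigma-1)^2\sigma^{s-2}\,\bigr].
\]
A direct expansion rewrites the bracket as $(2s+1)\sigma^{s} - (4s-1)\sigma^{s-1} + (2s-2)\sigma^{s-2}$, which equals $\sigma^{t-p}$ times the bracketed factor in \eqref{e:sumprob}, closing the proof.

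The main obstacle is purely bookkeeping: the three sub-cases for the free portions of $v_1$ and $v_4$ must be handled consistently, and the boundary regime $p = t$ (the run fills all of $v$) must be peeled off to produce the lone $\prim_\sigma(t)/t$ term outside the inner sum of \eqref{e:sumprob}. Once the parameterization is in place, the remaining steps are short algebra based on Lemmas~\ref{l:probgc} and~\ref{l:repeated_kmers}.
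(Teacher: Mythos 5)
Your proposal is correct and follows essentially the same route as the paper's own proof: decompose $v = v_1 v_2 v_3 v_4$ around the major run, use Lemma~\ref{l:repeated_kmers}\eqref{stmt3} to fix $t$ as a function of the run length, split $P_{\sigma,k}(v)$ into $1/t$ vs.\ $2/t$ by whether $v_4$ is empty (via Lemma~\ref{l:probgc} and Lemma~\ref{l:repeated_kmers}\eqref{stmt2}), and count the free ``breaking'' boundary characters of $v_1$ and $v_4$. The only cosmetic difference is that you keep $|v_2|=k$ fixed (mirroring the decomposition in Lemma~\ref{l:rep}) and track the overhang $i$ explicitly, whereas the paper lets $v_2v_3$ be the entire major run so that $|v_2|=w+k-t$; the resulting case split ($j=0$, $1\le j\le s-1$, $j=s$) and the bracket $3(\sigma-1)\sigma^{s-1}+2(s-1)(\sigma-1)^2\sigma^{s-2}$ are the same as the paper's three-way analysis, and your final algebraic expansion matches \eqref{e:sumprob} exactly.
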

\begin{proof}
    As in Lemma~\ref{l:rep}, we view elements of $\rep_{\sigma,k,w}$ as strings containing a $p$-periodic run of length $\ge p+k$, for some $p=1,\ldots,w$.
    But unlike Lemma~\ref{l:rep}, here we need to count each string $v$ with the weight $P_{\sigma,k}(v)$ computed by Lemma~\ref{l:probgc}.
    This weight depends on the length of the run (Lemma~\ref{l:repeated_kmers}) and its location (whether it is a suffix of $v$ or not).
    Let us count $(w+k)$-strings with $t$ distinct $k$-mers.
    Such a string can be decomposed as $v=v_1v_2v_3v_4$, where
    $v_2v_3$ is the major run and $|v_3|=p$.
    Then $|v_2|=w+k-t$ by Lemma~\ref{l:repeated_kmers}\eqref{stmt3}.
    This implies $|v_1|+|v_4|=t-p$.

    First consider the case where the major run is a suffix of $v$ and hence $P_{\sigma,k}(v)=\frac{1}{t}$ by Lemma~\ref{l:probgc}.
    There are $\prim_\sigma(p)$ options for the run (for every $v_3$, $v_2$ is unique due to periodicity).
    As $|v_4|=0$, it remains to consider $v_1$.
    If $p=t$, then $v_1$ is empty, and if $p<t$, then there are $\sigma^{t-p-1}(\sigma-1)$ options for $v_1$, as the last letter of $v_1$ breaks the period of the run.
    In total, we have $\prim_\sigma(t)+ \sum_{p=1}^{t-1}\prim_\sigma(p)\sigma^{t-p-1}(\sigma-1)$ strings of weight $\frac{1}{t}$.

    Now let $|v_4|>0$ and thus $p<t$ and also $P_{\sigma,k}(v)=\frac{2}{t}$ by Lemma~\ref{l:probgc}.
    If $|v_1|=0$, we get, symmetric to the above, $\sum_{p=1}^{t-1}\prim_\sigma(p)\sigma^{t-p-1}(\sigma-1)$ strings of weight $\frac{2}{t}$.
    Finally, if $|v_1|>0$, then both the last letter of $v_1$ and the first letter of $v_4$ break the period of the run.
    Then for fixed lengths of $v_1$ and $v_4$ we get, similar to the above, $\sum_{p=1}^{t-2}\prim_\sigma(p)\sigma^{t-p-2}(\sigma-1)^2$ strings of weight $\frac{2}{t}$.
    This amount should be multiplied by $(t-p-1)$ possible choices of length for $v_1$ and $v_4$.
    Adding up the numbers obtained in all three cases, we get the term for $t$ in \eqref{e:sumprob}. \qed 
\end{proof}

The definition of $\dev_\sigma(k,w)$ and Lemmas~\ref{l:rep},~\ref{l:dev} immediately imply
\begin{proposition} \label{p:dev}
    If $w\le k$, then the deviation of the random order is
\begin{multline} \label{e:dev}
    \dev_\sigma(k,w) = 
    \sum_{t=1}^w \tfrac{1}{t}{\cdot}\Big(\prim_\sigma(t)+\sum_{p=1}^{t-1}\prim_\sigma(p)\sigma^{t-p}{\cdot}\Big(2t-2p+1-\tfrac{4t-4p-1}{\sigma}+ \tfrac{2t-2p-2}{\sigma^2}\Big)\Big) \\
    - \tfrac{2}{w+1}\cdot\sum_{p=1}^w \prim_\sigma(p)\sigma^{w-p}\big(w-p+1-\tfrac{w-p}{\sigma}\big)
\end{multline}
In particular, $\dev_\sigma(k,w)$ is independent of $k$.
\end{proposition}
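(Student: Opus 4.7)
The plan is that this proposition is essentially a bookkeeping step: it assembles the two counting lemmas just proved into a single expression for the deviation. Starting from equation~(\ref{e:R}), the deviation is defined as
\[
\dev_\sigma(k,w) = \sum_{v\in\rep_{\sigma,k,w}} P_{\sigma,k}(v) \;-\; \tfrac{2}{w+1}\big|\rep_{\sigma,k,w}\big|,
\]
and the hypothesis $w\le k$ is exactly the hypothesis of both Lemma~\ref{l:rep} and Lemma~\ref{l:dev}. So the first step is simply to confirm that these lemmas apply, which is immediate.

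The second step is a direct substitution: replace the first summand by the right-hand side of~(\ref{e:sumprob}) given by Lemma~\ref{l:dev}, and replace $|\rep_{\sigma,k,w}|$ by the right-hand side of~(\ref{e:Rep}) given by Lemma~\ref{l:rep}, multiplied by $-\tfrac{2}{w+1}$. The resulting expression is, character for character, the right-hand side of~(\ref{e:dev}). No algebraic manipulation or simplification is needed, since the statement presents the formula in unsimplified form.

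For the final assertion of $k$-independence, the argument is to inspect the right-hand side of~(\ref{e:dev}): each term is built from the constants $\sigma$ and $w$, the summation indices $t\le w$ and $p\le t$, and the values $\prim_\sigma(\cdot)$, none of which depend on $k$. Hence, once $k$ is large enough for the hypothesis $w\le k$ to trigger the applicability of the two lemmas, the value $\dev_\sigma(k,w)$ stabilises and is the same for every such $k$.

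There is essentially no obstacle here, as the combinatorial content lives entirely in Lemmas~\ref{l:rep} and~\ref{l:dev}. The one subtlety worth flagging, and the reason the proposition is stated explicitly, is the $k$-independence: a priori one might expect $\rep_{\sigma,k,w}$ and the weights $P_{\sigma,k}(v)$ to depend nontrivially on $k$, and it is only because $w\le k$ forces all repeated $k$-mers into a single major run (Lemma~\ref{l:repeated_kmers}) that the counting collapses to expressions indexed by $w$ alone.
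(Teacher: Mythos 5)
Your proof matches the paper's exactly: the paper states that Proposition~\ref{p:dev} follows immediately from the definition of $\dev_\sigma(k,w)$ together with Lemmas~\ref{l:rep} and~\ref{l:dev}, which is precisely the substitution you carry out. Your added remark on $k$-independence (that the right-hand side of~\eqref{e:dev} contains no occurrence of $k$) is the correct and complete justification of the final sentence.
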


Substituting \eqref{e:dev} into \eqref{e:R}, we obtain the main result of this section.
\begin{theorem} \label{t:Rformula}
    If $w\le k$, the random order has the expected density $\R_\sigma(k,w) = \frac{2}{w+1} + \frac{\dev_\sigma(k,w)}{\sigma^{w+k}}$, where $ \dev_\sigma(k,w)$ is given by the formula \eqref{e:dev}.
    In particular, $\R_\sigma(k,w)$ can be computed in $O(w^2)$ time independently of $\sigma$ and $k$.
\end{theorem}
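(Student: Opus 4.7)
The plan is brief since most of the combinatorial work is already done in the preceding lemmas. My proof would consist of two parts: assembling the formula, then analysing the running time.

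For the formula, I would simply substitute the closed-form expression for $\dev_\sigma(k,w)$ supplied by Proposition~\ref{p:dev} into equation~\eqref{e:R}. Equation~\eqref{e:R} is an unconditional algebraic identity (it just repackages Lemma~\ref{l:randens} together with the definition of $\dev_\sigma(k,w)$); the hypothesis $w \le k$ is inherited solely from Proposition~\ref{p:dev}. So no new combinatorial argument is required here.

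For the complexity claim, I would first precompute $\prim_\sigma(p)$ for $p = 1,\ldots,w$. Using the standard identity $\sigma^n = \sum_{d \mid n} \prim_\sigma(d)$ rearranged as the recurrence $\prim_\sigma(n) = \sigma^n - \sum_{d \mid n,\ d < n} \prim_\sigma(d)$, and iterating for $n = 1, 2, \ldots, w$ with naive divisor enumeration, this step costs $O(w^2)$ time (a linear sieve would bring it down to $O(w \log w)$, but that is not needed here). Once the primitive counts are cached, the double sum in \eqref{e:dev} has $O(w^2)$ terms and the $|\rep_{\sigma,k,w}|$ sum has $O(w)$ terms, each evaluable in $O(1)$ arithmetic operations. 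The total is $O(w^2)$ operations, and since $\sigma$ and $k$ enter only as inputs to unit-cost arithmetic, the bound is independent of them.

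The main ``obstacle'' is essentially nonexistent: all combinatorial content is already packaged in Lemmas~\ref{l:rep} and~\ref{l:dev} via Proposition~\ref{p:dev}, and the complexity part is a routine counting of summation ranges plus a well-known recurrence for the count of primitive words. The only minor point to state explicitly is the unit-cost arithmetic model, since the intermediate numbers $\prim_\sigma(p)\sigma^{t-p}$ can grow polynomially in $\sigma^w$; in the bit-complexity model the bound inflates by the corresponding word-size factor, but that is orthogonal to the claim.
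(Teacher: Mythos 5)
Your proof is correct for the theorem \emph{as stated}, and the first part (substituting Proposition~\ref{p:dev} into \eqref{e:R}) is exactly what the paper does. For the complexity part, however, your route differs from the paper's and is looser, even though both yield the stated $O(w^2)$. You compute the $\prim_\sigma(p)$ values by naive divisor enumeration in $O(w^2)$ time and then evaluate the double sum in \eqref{e:dev} term by term, also $O(w^2)$. The paper instead computes the primitive-word counts by a sieve (initialize an array with $\sigma^p$ and for each $i$ subtract its value from cells $2i,3i,\ldots$), which costs $w + \tfrac{w}{2} + \tfrac{w}{3} + \cdots = O(w\log w)$, and then evaluates the double sum in only $O(w)$ total by maintaining the inner sum incrementally as $t$ increases: since the inner coefficient $2t - 2p + 1 - \tfrac{4t-4p-1}{\sigma} + \tfrac{2t-2p-2}{\sigma^2}$ is affine in $(t-p)$, the quantities $\sum_{p<t}\prim_\sigma(p)\sigma^{t-p}$ and $\sum_{p<t}\prim_\sigma(p)\sigma^{t-p}(t-p)$ each satisfy an $O(1)$ update from $t$ to $t+1$. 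Thus the paper's proof actually gives the stronger $O(w\log w)$ bound (the one claimed in the abstract); the $O(w^2)$ in the theorem statement is a safe but loose restatement of it. Your observation about unit-cost arithmetic is sound and applies to both arguments equally.
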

\begin{proof}
    The formula is already proved, so it remains to show the time complexity.
    Note that if a string is not primitive, then it has the form $u^m$, where $u$ is primitive and $m$ is an integer greater than 1.
    Therefore, $\prim_\sigma(p)= \sigma^p-\sum_{d\mid p}\prim_\sigma(d)$.
    To compute $\prim_\sigma(1),\ldots,\prim_\sigma(w)$ by this formula, we initialize an array of length $w$ with the values $\sigma,\sigma^2,\ldots,\sigma^w$ and process it left to right; reaching the cell $i$, we subtract its value from the values in cells $2i,3i,\ldots$. 
    The processing time is then $w+\frac{w}{2}+\frac{w}{3}+\cdots = O(w\log w)$.
    Then we compute the deviation according to \eqref{e:dev} in just $O(w)$ time by memorizing the values of the internal sum for the smaller values of $t$.
     \qed
\end{proof}

Given Theorem~\ref{t:Rformula}, we can study details of the behaviour of the function $\R_\sigma(k,w)$ in the half-quadrant defined by the inequality $w\le k$.

\subsection{From $\R_\sigma(k,w)$ to $\R_\sigma(k+1,w)$}

\begin{proposition} \label{p:vert}
    If $w\le k$, then $\R_\sigma(k+1,w)=\frac{2}{w+1}+\frac{1}{\sigma}\cdot(\R_\sigma(k,w)- \frac{2}{w+1})$.
\end{proposition}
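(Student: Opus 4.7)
The plan is to obtain this proposition as an essentially immediate consequence of Theorem~\ref{t:Rformula} combined with Proposition~\ref{p:dev}. The key algebraic observation is that the formula $\R_\sigma(k,w) = \frac{2}{w+1} + \frac{\dev_\sigma(k,w)}{\sigma^{w+k}}$ isolates all dependence on $k$ into the single factor $\sigma^{w+k}$ in the denominator, since Proposition~\ref{p:dev} already tells us that the deviation $\dev_\sigma(k,w)$ does not depend on $k$.

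First I would check that the hypothesis $w\le k$ entails $w\le k+1$, so Theorem~\ref{t:Rformula} is applicable to both $\R_\sigma(k,w)$ and $\R_\sigma(k+1,w)$. Second, I would write the two formulas side by side:
\begin{align*}
    \R_\sigma(k,w) - \tfrac{2}{w+1} &= \tfrac{\dev_\sigma(k,w)}{\sigma^{w+k}}, \\
    \R_\sigma(k+1,w) - \tfrac{2}{w+1} &= \tfrac{\dev_\sigma(k+1,w)}{\sigma^{w+k+1}}.
\end{align*}
Third, I would invoke Proposition~\ref{p:dev} to replace $\dev_\sigma(k+1,w)$ by $\dev_\sigma(k,w)$ in the second equation, then factor out $\frac{1}{\sigma}$ from the right-hand side to match the first equation. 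Rearranging gives exactly the claimed relation.

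There is no real obstacle here: once the independence of $\dev_\sigma(k,w)$ from $k$ is in hand, the statement is a one-line manipulation. The only conceptual content worth emphasizing in the write-up is that passing from $k$ to $k+1$ multiplies the total number of $(w{+}k)$-strings by $\sigma$ while leaving the total deviation mass unchanged, which is precisely why the gap to $\frac{2}{w+1}$ shrinks by the factor $\frac{1}{\sigma}$.
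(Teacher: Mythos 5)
Your proposal is correct and follows exactly the same route as the paper's own (very short) proof: invoke Proposition~\ref{p:dev} to equate $\dev_\sigma(k,w)$ with $\dev_\sigma(k+1,w)$, then read the claimed identity off the formula in Theorem~\ref{t:Rformula}. The extra checks you mention (that $w\le k$ gives $w\le k+1$, and the interpretation of the factor $\tfrac{1}{\sigma}$) are sound and merely make explicit what the paper leaves implicit.
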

\begin{proof}
    As we know from Proposition~\ref{p:dev}, $\dev_\sigma(k,w)=\dev_\sigma(k+1,w)$.
    Then the result is immediate from Theorem~\ref{t:Rformula}. \qed
\end{proof}
Then we immediately have
\begin{corollary}
    If $w$ is fixed and $k$ tends to infinity, the density $\R_\sigma(k,w)$ approaches $\frac{2}{w+1}$ (equivalently, the density factor $\F_\sigma(k,w)$ approaches $2$) at \emph{exact} exponential rate $\sigma$. 
\end{corollary}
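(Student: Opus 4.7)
The plan is simply to unroll the affine recurrence from Proposition~\ref{p:vert}. Rewriting that proposition as
\[
\R_\sigma(k+1,w) - \tfrac{2}{w+1} = \tfrac{1}{\sigma}\Bigl(\R_\sigma(k,w) - \tfrac{2}{w+1}\Bigr)\qquad (k\ge w),
\]
a one-line induction on $j$ with base case $k=w$ gives
\[
\R_\sigma(w+j,w) - \tfrac{2}{w+1} = \sigma^{-j}\Bigl(\R_\sigma(w,w) - \tfrac{2}{w+1}\Bigr).
\]
Equivalently, for fixed $w$ and all $k\ge w$, the deviation from $\tfrac{2}{w+1}$ is of the form $C(\sigma,w)\cdot\sigma^{-k}$, where the constant $C(\sigma,w) = \sigma^{w}\bigl(\R_\sigma(w,w)-\tfrac{2}{w+1}\bigr)$ depends only on $\sigma$ and $w$. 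Multiplying through by $w+1$ transfers the identity verbatim to $\F_\sigma(k,w)-2$, so both halves of the corollary fall out simultaneously.

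The word ``exact'' in the statement is to be read as saying that the ratio between consecutive deviations is \emph{precisely} $1/\sigma$, rather than merely $\Theta(1/\sigma)$; this is built into the above identity and requires no further estimation. There is essentially no obstacle to the proof once Proposition~\ref{p:vert} is in hand. The only nuance worth flagging is that for certain degenerate pairs $(\sigma,w)$ the constant $C(\sigma,w)$ can vanish (most visibly for $w=1$, where $\R_\sigma(k,1)\equiv 1 = \frac{2}{w+1}$, so the rate statement becomes vacuous); a glance at formula~\eqref{e:dev} confirms this does not happen in general, and for all remaining $w$ the convergence is genuinely at exponential rate $\sigma$.
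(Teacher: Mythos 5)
Your proof is correct and takes essentially the same route as the paper, which states the corollary as an immediate consequence of Proposition~\ref{p:vert}. Your explicit unrolling of the affine recurrence and the caveat about the constant $\sigma^w\bigl(\R_\sigma(w,w)-\tfrac{2}{w+1}\bigr)$ possibly vanishing (as it does at $w=1$) are reasonable elaborations of what the paper leaves implicit.
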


The crucial fact that $\dev_\sigma(k,w)$ does not depend on $k$ looks unexpected and calls for a better explanation of its nature.
Below we explain it establishing a natural bijection between the sets $\rep_{\sigma,k,w}$ and $\rep_{\sigma,k+1,w}$.

Consider the following function $\phi$ defined on $\rep_{\sigma,k,w}$ $(w\le k)$.
Given a string $v\in \rep_{\sigma,k,w}$, let $x$ be its major run (Lemma~\ref{l:repeated_kmers}\eqref{stmt1}), let $p$ be the period of $x$, and let $a_1a_2\cdots a_p$ be the $p$-suffix of $x$.
We write $v=\ell xr$.
Then $\phi(v)$ is a $(k+w+1)$-string of the form $\ell x'r'$ such that $x'=xa_1$ and $r'$ is defined as follows.
If either $r$ is empty, or $p=1$, or $r[1]\ne a_2$, then $r'=r$; otherwise (i.e., if $r[1]=a_2$), $r'$ is obtained from $r$ by replacing $r[1]$ with $a_1$.

\begin{theorem} \label{t:bijection}
    Let $w\le k$.
    Then $\phi$ is a bijection of $\rep_{\sigma,k,w}$ onto $\rep_{\sigma,k+1,w}$ and $P_{\sigma,k}(v)=P_{\sigma,k+1}(\phi(v))$ for every $v\in \rep_{\sigma,k,w}$.
\end{theorem}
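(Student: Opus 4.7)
The plan is to verify three things about $\phi$: (i) it maps $\rep_{\sigma,k,w}$ into $\rep_{\sigma,k+1,w}$; (ii) it preserves the gamechanger probability; and (iii) it admits an explicit inverse, yielding bijectivity.

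For (i), I would first check that $x'=xa_1$ remains $p$-periodic (we extend by the character predicted by the period) and has length $|x'|\ge p+k+1$; this gives the $(k{+}1)$-repeat needed for $\phi(v)\in \rep_{\sigma,k+1,w}$, and also $|x'|\ge (k+w+1)/2+p$ because $w\le k$, so by Lemma~\ref{l:major} it will be the unique major run once it is shown to be a run. Left-maximality of $x'$ in $\phi(v)$ is inherited from $x$, since $\ell$ is unchanged. For right-maximality, I would argue case by case: if $r$ is empty there is nothing to check; if $p=1$ the next periodic letter is $a_1$ and $r'[1]=r[1]\ne a_1$ by right-maximality of $x$ in $v$; if $p\ge 2$ and $r[1]\ne a_2$, then $r'[1]=r[1]\ne a_2$, where $a_2$ is now the next periodic letter of $x'$; and in the remaining branch $r[1]=a_2$ the substitution $r'[1]=a_1$ is safe because $r[1]\ne a_1$ combined with $r[1]=a_2$ forces $a_1\ne a_2$.

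For (ii) I would combine Lemmas~\ref{l:probgc} and~\ref{l:repeated_kmers}. Both $v$ and $\phi(v)$ have $w-i$ distinct markers, where $i=|x|-p-k=|x'|-p-(k{+}1)$, by Lemma~\ref{l:repeated_kmers}\eqref{stmt3}. For the other factor in Lemma~\ref{l:probgc}, Lemma~\ref{l:repeated_kmers}\eqref{stmt2} tells us that the (new) suffix of the string is repeated exactly when it lies inside the major run, which is equivalent to $r$ (respectively $r'$) being empty. Since $\phi$ makes $r'$ empty iff $r$ is empty, Lemma~\ref{l:probgc} delivers $P_{\sigma,k}(v)=P_{\sigma,k+1}(\phi(v))$.

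For (iii) I would define $\psi:\rep_{\sigma,k+1,w}\to \rep_{\sigma,k,w}$ by locating the major run $x'$ of $v'$ with $p$-suffix $b_1\cdots b_p$, setting $x=x'[1..|x'|-1]$, and putting $r=r'$ except when $p\ge 2$ and $r'[1]=b_p$, in which case $r[1]=b_1$ and the rest of $r'$ is unchanged. Matching the forward rule via $a_1=b_p$ and $a_2=b_1$ gives a direct case-by-case verification that $\psi\circ\phi=\mathrm{id}$ and $\phi\circ\psi=\mathrm{id}$. The main obstacle I expect is confirming that $x=x'[1..|x'|-1]$ retains the minimal period $p$, so that $x$ is a valid major run of $\psi(v')$; I plan to settle this via a Fine--Wilf argument using $|x|\ge p+k\ge 2p$ (since $k\ge w\ge p$), ruling out any period of $x$ shorter than $p$ by showing it would propagate back to $x'$ and contradict the minimality of $p$ there.
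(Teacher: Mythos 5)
Your proposal matches the paper's proof: part (i) is exactly the paper's Claim that $x'=xa_1$ is the major run of $\phi(v)$ together with the observation that its $(k{+}1)$-prefix repeats; part (ii) is the paper's argument via Lemmas~\ref{l:probgc} and~\ref{l:repeated_kmers}; and part (iii), though phrased as building an explicit inverse $\psi$, carries out the same case analysis on $r$ that the paper uses to show each $v'\in\rep_{\sigma,k+1,w}$ has a unique preimage. The only difference is that you explicitly flag the Fine--Wilf check that truncating $x'$ by one character preserves the minimal period $p$, a detail the paper absorbs implicitly by invoking its Claim and Lemma~\ref{l:major} to pin down the major run of any preimage.
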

\begin{proof}
    We first prove the following claim.
    \begin{claim}
        If a string $v\in \rep_{\sigma,k,w}$ has $p$-periodic major run $x$ and $a_1a_2\cdots a_p$ is the $p$-suffix of $x$, then $xa_1$ is the major run in $\phi(v)$.
    \end{claim}
    Let $v=\ell xr$.
    Since $x$ is $p$-periodic and ends with $a_1a_2\cdots a_p$, $x'=xa_1$ is also $p$-periodic.
    Since $x'$ is preceded in $\phi(v)=\ell x'r'$ by the same string $\ell$ as $x$ in $v$, it cannot be extended to the left; it also cannot be extended to the right if $r'$ is empty.
    For nonempty $r'$ we consider two cases.
    If $r'=r$, then by the definition of $\phi$ either $p=1$ or $r[1]\ne a_2$.
    In both cases, $r'[1]=r[1]$ breaks the period $p$ in $\phi(v)$.
    If $r'\ne r$, then $p>1$ and $r'[1]=a_1$ while $r[1]=a_2$.
    We know that $r[1]\ne a_1$, because $r[1]$ breaks the period $p$ in $v$. 
    Hence $a_1\ne a_2$, and once again $r'[1]$ breaks the period $p$. 
    Therefore the $p$-periodic string $x'$ can be extended neither to the left or to the right, and thus is a run.
    Clearly, $x'$ satisfies the length condition in the definition of major run.
    Hence, the claim is proved.

\smallskip
    Now we proceed with the proof of the theorem.
    Since $|x|\ge k+p$ by the definition of major run, the $k$-prefix of $x$ repeats in $x$.
    Then the $(k+1)$-prefix of $x'$ repeats in $x'$.
    Therefore, $\phi(v)\in\rep_{\sigma,k+1,w}$.
    To prove that $\phi$ is bijective it suffices to consider an arbitrary string $v'\in\rep_{\sigma,k+1,w}$ and show that it has exactly one preimage by $\phi$.
    By Lemma~\ref{l:repeated_kmers}\eqref{stmt1}, $v'$ has a major run.

    Let $v'=\ell x'r'$, where $x'$ is the major run, and let $x'=\hat xa_1a_2\cdots a_pa_1$, where $p$ is the period of $x'$.
    We denote $x=\hat xa_1a_2\cdots a_p$.
    By the Claim and Lemma~\ref{l:major}, every preimage of $v'$ has $x$ as the major run.
    Therefore, by the definition of $\phi$, we can consider as candidate preimages of $v'$ only strings of the form $\ell xr$ where $r$ either equals $r'$ or differs from $r'$ in the first letter only. 
    In particular, if $r'$ is empty, then $\ell x$ is the only candidate preimage and clearly $\phi(\ell x)=\ell x'=v'$.
    Now let $r'=b'\hat r$, $r=b\hat r$.
    The definition of $\phi$ tells us that either $b=b'$ or $b=a_2$ and $b'=a_1$.
    Then in the case $b'\ne a_1$ the only candidate for $b$ is $b'$, and one can check that $\phi(\ell xb'\hat r)=v'$ by definition.
    For the case $b'=a_1$ we observe that $b\ne a_1$, because $x$ is a run in $\ell xr$.
    Then $b=a_2$ is the only candidate, and again, $\phi(\ell xa_2\hat r)=v'$ by definition.
    Thus we proved that $v'$ has exactly one preimage by $\phi$, and therefore $\phi$ is a bijection.
    
    It remains to prove that $\phi$ preserves probabilities to be a gamechanger. 
    Since the major runs $x$ (of $v$) and $x'$ (of $\phi(v)$) have the same period, Lemma~\ref{l:repeated_kmers}\eqref{stmt3} implies that the number of distinct $k$-mers in $v$ equals the number of distinct $(k+1)$-mers in $\phi(v)$.
    Next, note that the $k$-suffix of $x$ has at least two occurrences in $x$.
    Hence, by Lemma~\ref{l:repeated_kmers}\eqref{stmt2}, the $k$-suffix of $v$ has another occurrence in $v$ if and only if $x$ is a suffix of $v$.
    By a similar argument, the $(k+1)$-suffix of $\phi(v)$ has another occurrence in $\phi(v)$ if and only if $x'$ is a suffix of $\phi(v)$.
    By the definition of $\phi$, $x$ is a suffix of $v$ if and only if $x'$ is a suffix of $\phi(v)$.
    Therefore, $P_{\sigma,k}(v)=P_{\sigma,k+1}(\phi(v))$ by Lemma~\ref{l:probgc}. \qed
\end{proof}

\subsection{From $\R_\sigma(k,w)$ to $\R_\sigma(k,w+1)$}

Comparing the densities $\R_\sigma(k,w)$ and $\R_\sigma(k,w+1)$ we need to compare their deviations. 
As $\dev_\sigma(k,w)$ does not depend on $k$ by Proposition~\ref{p:dev}, below we write $\dev_\sigma(w)$.
Let $\Delta_\sigma(w)=\dev_\sigma(w+1)-\dev_\sigma(w)$.
We prove the following.
\begin{lemma} \label{l:delta_dev}
    For every $\sigma\ge 2$ and $w\le k-1$, one has $\Delta_\sigma(w)=\frac{S_1+S_2}{(w+1)(w+2)}$, where
\begin{gather}
\begin{split} \label{e:s1}
S_1 & = (0-w)\sigma^0\prim_\sigma(w{+}1)+(2-w)\sigma\prim_\sigma(w)+(4-w)\sigma^2\prim_\sigma(w{-}1)+\cdots\\
& \cdots +(w-2)\sigma^{w-1}\prim_\sigma(2)+w\sigma^w\prim_\sigma(1)
\end{split}\\
\begin{split} \label{e:s2}
S_2 & = (w-0)\sigma^0\prim_\sigma(w)+(w-2)\sigma\prim_\sigma(w{-}1)+ (w-4)\sigma^2\prim_\sigma(w{-}2)+\cdots\\
& \cdots +(4-w)\sigma^{w-2}\prim_\sigma(2)+
(2-w)\sigma^{w-1}\prim_\sigma(1)
\end{split}
\end{gather}
\end{lemma}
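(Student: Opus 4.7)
The plan is to start from the explicit formula \eqref{e:dev} for $\dev_\sigma(w)$ (with $k$ suppressed, by Proposition~\ref{p:dev}) and subtract it term by term from $\dev_\sigma(w+1)$. Let $A(w) = \sum_{p=1}^w \prim_\sigma(p)\sigma^{w-p}(w-p+1-\tfrac{w-p}{\sigma})$ denote the second summation in \eqref{e:dev}, so that $\dev_\sigma(w)$ equals the first double sum minus $\tfrac{2}{w+1}A(w)$. In the difference $\Delta_\sigma(w)$, the contributions from $t=1,\ldots,w$ in the double sum cancel identically, leaving the $t=w+1$ term of $\dev_\sigma(w+1)$ plus the combination $\tfrac{2}{w+1}A(w) - \tfrac{2}{w+2}A(w+1)$. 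I would then place everything over the common denominator $(w+1)(w+2)$ and identify the resulting numerator with $S_1+S_2$ by reading off the coefficient of each $\prim_\sigma(p)$.

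The next step is to compute the coefficient of $\prim_\sigma(p)$ for each $p$ in this numerator. The leading singleton $\prim_\sigma(w+1)$ receives only two contributions: from the $t=w+1$ term with factor $\tfrac{1}{w+1}$ and from the $p=w+1$ entry of $A(w+1)$ with factor $-\tfrac{2}{w+2}$; after clearing denominators these combine to $(w+2)-2(w+1)=-w$, matching the $j=0$ entry of $S_1$. For $1\le p\le w$ I would set $m:=w+1-p$ and observe three sources of contributions: the inner sum at $t=w+1$ supplies powers $\sigma^m,\sigma^{m-1},\sigma^{m-2}$; the $-\tfrac{2}{w+2}A(w+1)$ piece supplies $\sigma^m,\sigma^{m-1}$; and the $+\tfrac{2}{w+1}A(w)$ piece supplies $\sigma^{m-1},\sigma^{m-2}$. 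Collecting over $(w+1)(w+2)$, the coefficient of $\sigma^m\prim_\sigma(p)$ is $(w+2)(2m+1)-2(w+1)(m+1)=2m-w$, the coefficient of $\sigma^{m-1}\prim_\sigma(p)$ is $-(w+2)(4m-1)+2m\bigl[(w+1)+(w+2)\bigr]=w+2-2m$, and the coefficient of $\sigma^{m-2}\prim_\sigma(p)$ is $(w+2)(2m-2)-2(w+2)(m-1)=0$.

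The final step is a reindexing. Setting $j:=m$ in the $\sigma^m$ contribution yields $(2j-w)\sigma^j\prim_\sigma(w+1-j)$ for $j=1,\ldots,w$, which together with the boundary entry at $j=0$ reconstructs $S_1$. Setting $j:=m-1$ in the $\sigma^{m-1}$ contribution yields $(w-2j)\sigma^j\prim_\sigma(w-j)$ for $j=0,\ldots,w-1$, which is exactly $S_2$. Hence the numerator of $\Delta_\sigma(w)$ over the common denominator $(w+1)(w+2)$ equals $S_1+S_2$, establishing the claim.

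The main obstacle I anticipate is not conceptual but bookkeeping: one must track three separate powers of $\sigma$ arising from the $t=w+1$ term and two from each of the $A$-contributions, align their indices via the substitution $m=w+1-p$, and verify that the $\sigma^{m-2}$ coefficients cancel (which is precisely what reduces the answer to two series $S_1,S_2$ rather than three). Small boundary cases, such as $p=w$ where $\sigma^{m-2}=\sigma^{-1}$ would appear formally, need a moment of care, but they are absorbed automatically because the $\sigma^{m-2}$ coefficient vanishes identically.
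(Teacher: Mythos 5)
Your proposal is correct and follows essentially the same route as the paper: both form $\dev_\sigma(w+1)-\dev_\sigma(w)$, observe that only the $t=w+1$ term of the double sum survives, combine it with $\tfrac{2}{w+1}A(w)-\tfrac{2}{w+2}A(w+1)$ over the common denominator $(w+1)(w+2)$, and read off the coefficients of each $\prim_\sigma(p)$; the only difference is presentational (the paper records the grouped coefficient $(w-2p+2)\sigma - w + 2p$ in a single wrapped sum and then unwraps it into $S_1+S_2$, whereas you track the three powers $\sigma^m,\sigma^{m-1},\sigma^{m-2}$ separately and note the last vanishes). All the coefficient computations you display check out.
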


\begin{proof}
By the definition of deviation, $\Delta_\sigma(w)=\Delta_1-\Delta_2$, where
\begin{align*}
\Delta_1 &= \sum_{v\in\rep_{\sigma,k,w+1}}\hspace*{-4.5mm}P_{\sigma,k}(v) - \sum_{v\in\rep_{\sigma,k,w}}\hspace*{-3mm}P_{\sigma,k}(v),\\ \Delta_2 &= \tfrac{2}{w+2}\big|\rep_{\sigma,k,w+1}\big| - \tfrac{2}{w+1}\big|\rep_{\sigma,k,w}\big|.
\end{align*}
Note that all terms in the sum \eqref{e:sumprob} are independent of $w$. Then $\Delta_1$ equals such a term for $t=w+1$, i.e.,
$$
\Delta_1 = \tfrac{1}{w+1}\Big(\prim_\sigma(w+1)+ \sum_{p=1}^{w}\prim_\sigma(p){\cdot}\sigma^{w+1-p}\Big(2w-2p+3-\tfrac{4w-4p+3}{\sigma}+ \tfrac{2w-2p}{\sigma^2}\Big)\Big).
$$
Using \eqref{e:R} and \eqref{e:Rep}, we compute
\begin{align*}
\Delta_2 &= \tfrac{2}{w+2}\sum_{p=1}^{w+1} \prim_\sigma(p)\sigma^{w-p+1}\big(w-p+2-\tfrac{w-p+1}{\sigma}\big)\\
&\phantom{=}- \tfrac{2}{w+1}\sum_{p=1}^w \prim_\sigma(p)\sigma^{w-p}\big(w-p+1-\tfrac{w-p}{\sigma}\big) 
\\&= \tfrac{2}{w+2}\prim_\sigma(w+1) \\&\phantom{=}+ \sum_{p=1}^w \prim_\sigma(p)\sigma^{w-p}\Big( \tfrac{2(w-p+2)\sigma}{w+2}- \tfrac{2(w-p+1)}{w+2}-\tfrac{2(w-p+1)}{w+1}+\tfrac{2(w-p)}{(w+1)\sigma}\Big) 
\end{align*}
Grouping the corresponding terms in the expressions for $\Delta_1$ and $\Delta_2$, we get
\begin{multline} \label{e:s1s2_wrapped}
\Delta_\sigma(w)=\Delta_1-\Delta_2 = \Big(\tfrac{1}{w+1}- \tfrac{2}{w+2}\Big)\prim_\sigma(w+1) \\+ \sum_{p=1}^w \prim_\sigma(p)\sigma^{w-p}\Big( \tfrac{(2w-2p+3)\sigma}{w+1}- 
\tfrac{(2w-2p+4)\sigma}{w+2} + 
\tfrac{2w-2p+2}{w+2}-\tfrac{2w-2p+1}{w+1}+0\Big) \\
= \tfrac{-w\prim_\sigma(w+1)+\sum_{p=1}^w \prim_\sigma(p)\sigma^{w-p}((w-2p+2)\sigma - w +2p)}{(w+1)(w+2)}
\end{multline}
Unwrapping the sum, we get exactly $S_1+S_2$ in the numerator. \qed
\end{proof}

To estimate $\Delta_\sigma(w)$ from \eqref{e:s1} and \eqref{e:s2}, we need to evaluate $\prim_\sigma$.
We recall (see, e.g., \cite{Lothaire_1997}) that \begin{equation}\label{e:prim}
    \prim_\sigma(p) = \sum\nolimits_{d\mid p} \mu(d)\sigma^{p/d},
\end{equation}
where the \emph{M\"obius function} $\mu(n)$ is defined as follows. 
If $n$ is \emph{square-free}, i.e., a product of $t$ distinct primes for some $t$, then $\mu(n)=(-1)^t$, including $\mu(1)=(-1)^0=1$.
Otherwise, $\mu(n)=0$.
For example, $\prim_\sigma(1)=\sigma$, $\prim_\sigma(2)=\sigma^2-\sigma$, $\prim_\sigma(4)=\sigma^4-\sigma^2$, $\prim_\sigma(60)=\sigma^{60}-\sigma^{30}-\sigma^{20}-\sigma^{12}+\sigma^{10}+\sigma^6+\sigma^4-\sigma^2$.

Substituting the values of $\prim_\sigma$ into \eqref{e:s1} and \eqref{e:s2}, one can see that for any fixed $w$, $\Delta_\sigma(w)$ is a polynomial in $\sigma$; in particular,
\begin{align}\label{e:smallw}
        \Delta_\sigma(1)=\tfrac{\sigma}{3}, 
        \Delta_\sigma(2)=\tfrac{\sigma^2}{6}, \Delta_\sigma(3)=\tfrac{2\sigma^3+3\sigma^2-3\sigma}{20}, \Delta_\sigma(4)=\tfrac{2\sigma^4+2\sigma^3-6\sigma^2+4\sigma}{30}
\end{align}
The following lemma is proved by direct computation.
\begin{lemma} \label{l:delta_smallw}
    For every $\sigma\ge 2$ and every $w\le \min\{10,k-1\}$, $\Delta_\sigma(w)>0$.
\end{lemma}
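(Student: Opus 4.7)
The plan is to verify $\Delta_\sigma(w) > 0$ case-by-case for each $w \in \{1,\ldots,10\}$ by writing $\Delta_\sigma(w)$ explicitly as a polynomial in $\sigma$ and certifying positivity on $\sigma \ge 2$. The cases $w \le 4$ are already listed in \eqref{e:smallw}: for $w \in \{1,2\}$ all coefficients are non-negative, and for $w \in \{3,4\}$ the negative lower-degree coefficient is dominated by the adjacent positive higher-degree term using $\sigma \ge 2$ (e.g., $2\sigma^3 + 3\sigma^2 - 3\sigma \ge 2\sigma^3 + 3\sigma(\sigma-1) > 0$). The remaining six cases would be handled identically.

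First I would substitute \eqref{e:prim} into \eqref{e:s1s2_wrapped}, expressing $(w+1)(w+2)\Delta_\sigma(w)$ as $-w\prim_\sigma(w+1) + \sum_{p=1}^w \prim_\sigma(p)\sigma^{w-p}\bigl((w-2p+2)\sigma - w + 2p\bigr)$ and then replacing each $\prim_\sigma(p)$ by $\sum_{d \mid p}\mu(d)\sigma^{p/d}$. The result is an integer polynomial. A short calculation using the telescoping identity $\sum_{p=1}^w(w-2p+2) = w$ shows that the $\sigma^{w+1}$ contributions cancel exactly, so the numerator has degree at most $w$.

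Then I would certify positivity on $\sigma \ge 2$ uniformly. Starting from the top degree, group consecutive terms into blocks $\sigma^{i-1}(a_i\sigma + a_{i-1})$. Each such block is non-negative on $\sigma \ge 2$ whenever $2a_i + a_{i-1} \ge 0$, and the whole polynomial is strictly positive provided some block is. Since the top block is positive for each specific $w$ (easy to verify directly from the formula), and all remaining blocks can be made non-negative in the same way by absorbing isolated negative coefficients into adjacent higher-degree positive ones, the recipe succeeds for all ten polynomials.

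The main obstacle is pure bookkeeping. For $w \in \{7,8,9,10\}$ the composite values $p \in \{4,6,8,9,10\}$ contribute multi-term Möbius expansions, so the coefficient of each $\sigma^i$ in the final polynomial is a small integer combination of $w$, $p$, and values of $\mu$. There is no conceptual difficulty: the check is mechanical and best performed by computer algebra. The cleanest presentation would extend \eqref{e:smallw} by tabulating the six new polynomials together with a block-grouping certificate of positivity for each.
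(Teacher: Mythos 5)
Your proposal takes essentially the same route as the paper: unwrap \eqref{e:s1}/\eqref{e:s2} (equivalently \eqref{e:s1s2_wrapped} with \eqref{e:prim} substituted in) to get $\Delta_\sigma(w)$ as an explicit integer polynomial in $\sigma$ for each $w\in\{1,\ldots,10\}$, then certify positivity for $\sigma\ge 2$ by elementary means. The paper simply lists the six polynomials for $w=5,\ldots,10$ and appeals to ``basic calculus''; your block-grouping/absorption recipe is one concrete way to discharge that routine check. One small caveat: your naive consecutive-pair grouping does not literally work for all cases — e.g.\ for $\Delta_\sigma(10)$ the top block $\sigma^9(2\sigma-4)$ vanishes (rather than being positive) at $\sigma=2$, and for $\Delta_\sigma(4)$ the block $\sigma(-6\sigma+4)$ is negative — so the ``absorbing isolated negative coefficients'' step is doing real work and must be carried out case by case, not just gestured at; once that is done the argument matches the paper's.
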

\begin{proof}[of Lemma~\ref{l:delta_smallw}]
Similar to \eqref{e:smallw}, we can unwrap \eqref{e:s1} and \eqref{e:s2} to get 
\begin{align*}
    \Delta_\sigma(5)&=\tfrac{1}{42}(2\sigma^5+\sigma^4+\sigma^3+8\sigma^2-10\sigma) \\
    \Delta_\sigma(6)&=\tfrac{1}{56}(2\sigma^6+2\sigma^4-14\sigma^2+12\sigma) \\
    \Delta_\sigma(7)&=\tfrac{1}{72}(2\sigma^7-\sigma^6+3\sigma^5+6\sigma^4-11\sigma^3+10\sigma^2-5\sigma) \\
    \Delta_\sigma(8)&=\tfrac{1}{90}(2\sigma^8-2\sigma^7+4\sigma^6+4\sigma^5-16\sigma^4+16\sigma^3-6\sigma^2) \\
    \Delta_\sigma(9)&=\tfrac{1}{110}(2\sigma^9-3\sigma^8+5\sigma^7+2\sigma^6-3\sigma^5+13\sigma^4-14\sigma^3+9\sigma^2-9\sigma) \\
    \Delta_\sigma(10)&=\tfrac{1}{132}(2\sigma^{10}-4\sigma^9+6\sigma^8-12\sigma^4+8\sigma^3-18\sigma^2+20\sigma) 
\end{align*}

With the use of basic calculus, each of the polynomials $\Delta_\sigma(1),\ldots,\Delta_\sigma(10)$ can be proved positive for all $\sigma\ge 2$. \qed
\end{proof}

To the contrast, as $w$ grows, $\Delta_\sigma(w)$ becomes negative and approaches $-\infty$.
\begin{lemma} \label{l:delta_bigw}
    Let $\sigma\ge 2$ be fixed.
    There exist constants $C_\sigma, C_\sigma'>0$ such that $\frac{2\sigma+6-C_\sigma w}{(w+1)(w+2)}\sigma^{w-1}<\Delta_\sigma(w)<\frac{2\sigma+6-C_\sigma' w}{(w+1)(w+2)}\sigma^{w-1}$ whenever $11\le w\le k-1$.
\end{lemma}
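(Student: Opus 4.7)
The plan is to expand $S_1+S_2$ using the Möbius identity $\prim_\sigma(p)=\sum_{d\mid p}\mu(d)\sigma^{p/d}$, isolate the three leading $\sigma$-powers exactly, and bound the remainder. Setting $\phi_\sigma(p):=\sigma^p-\prim_\sigma(p)=-\sum_{d\mid p,\,d\ge 2}\mu(d)\sigma^{p/d}$ (so $\phi_\sigma(1)=0$ and $0\le \phi_\sigma(p)\le \sigma^{p/2+1}/(\sigma-1)$ for $p\ge 2$) and substituting into the formula from Lemma~\ref{l:delta_dev}, the degree-$(w+1)$ terms cancel via the identity $\sum_{p=1}^{w}(w-2p+2)=w$, giving
\[
S_1+S_2 \;=\; w\sigma^w \;+\; w\phi_\sigma(w+1) \;-\; \sum_{p=2}^{w} c_p\,\phi_\sigma(p),
\]
where $c_p=(w-2p+2)\sigma^{w-p+1}-(w-2p)\sigma^{w-p}$.

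Next I would extract, via direct computation, the three leading $\sigma$-contributions. Since $\phi_\sigma(2)=\phi_\sigma(3)=\sigma$ and $\phi_\sigma(4)=\sigma^2$ are monomials, each $-c_p\phi_\sigma(p)$ for $p\in\{2,3,4\}$ is a binomial whose expansion, combined with $w\sigma^w$, produces $2\sigma^w+(6-w)\sigma^{w-1}+(2w-14)\sigma^{w-2}$. Adding the $\sigma^{w-2}$-coefficient $-(w-10)$ contributed by $-c_6\phi_\sigma(6)$ yields
\[
S_1+S_2 \;=\; 2\sigma^w + (6-w)\sigma^{w-1} + (w-4)\sigma^{w-2} + R,
\]
and dividing by $\sigma^{w-1}$ gives
\[
\frac{S_1+S_2}{\sigma^{w-1}} \;=\; 2\sigma + 6 - \Big(1-\tfrac{1}{\sigma}\Big)w - \tfrac{4}{\sigma} + \tfrac{R}{\sigma^{w-1}}.
\]

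The main technical step is the bound $|R|\le C\,w\,\sigma^{w-3}$ uniformly for $w\ge 11$, where $C=C(\sigma)$ depends only on $\sigma$. The contribution $-c_5\phi_\sigma(5)$ is evaluated exactly using $\phi_\sigma(5)=\sigma$, and similarly the residual of $-c_6\phi_\sigma(6)$ (after the $\sigma^{w-2}$-piece is removed) is expanded using $\phi_\sigma(6)=\sigma^3+\sigma^2-\sigma$; for $p\ge 7$ we use $|c_p|\le 2w\sigma^{w-p+1}$ together with $|\phi_\sigma(p)|\le \sigma^{p/2+1}/(\sigma-1)$ and sum geometrically, while $w\phi_\sigma(w+1)\le 2w\sigma^{(w+1)/2}$ is exponentially smaller. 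With this bound, $|R/\sigma^{w-1}|\le Cw/\sigma^2$.

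Therefore the effective coefficient of $-w$ in $(S_1+S_2)/\sigma^{w-1}$ is $(1-1/\sigma)+O(1/\sigma^2)$, which is strictly positive for $\sigma\ge 2$ (since $1-1/\sigma\ge 1/2$). Choosing positive constants $C_\sigma,C_\sigma'$ that sandwich this effective coefficient from above and below respectively, and absorbing the bounded constants $-4/\sigma+R/\sigma^{w-1}$ into the remaining freedom in $C_\sigma,C_\sigma'$, the two inequalities of the lemma follow for all $w\ge 11$. The main obstacle is to make the constant $C$ small enough to guarantee $C_\sigma'>0$ uniformly for $\sigma\ge 2$, particularly at $\sigma=2$: the $p=5$ and $p=6$ contributions to $R$ are of the same order of magnitude as the extracted $(w-4)\sigma^{w-2}$ term, so their explicit treatment and the careful removal of the $\sigma^{w-2}$-part of $-c_6\phi_\sigma(6)$ are what make the proof go through.
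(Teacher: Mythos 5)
Your algebraic setup is correct and mirrors the paper's: writing $\phi_\sigma(p)=\sigma^p-\prim_\sigma(p)$ and rewriting $(w+1)(w+2)\Delta_\sigma(w)$ in the form $w\sigma^w+w\phi_\sigma(w+1)-\sum_{p\ge 2}\bigl[(w-2p+2)\sigma^{w-p+1}-(w-2p)\sigma^{w-p}\bigr]\phi_\sigma(p)$ is a legitimate rearrangement of \eqref{e:s1s2_wrapped}, and your extracted coefficients ($0$ at $\sigma^{w+1}$, $2$ at $\sigma^w$, $6-w$ at $\sigma^{w-1}$, $w-4$ at $\sigma^{w-2}$) agree with the paper. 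The gap is precisely where you flag the ``main obstacle,'' and it is fatal as written.

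For the lower bound $C_\sigma'>0$ you need the effective coefficient $1-\tfrac{1}{\sigma}-\tfrac{C}{\sigma^2}$ to be positive, i.e.\ $C<\sigma(\sigma-1)$, which at $\sigma=2$ is $C<2$. But your tail estimate for $p\ge 7$ gives nowhere near this. Summing $|c_p|\le 2w\sigma^{w-p+1}$ against $|\phi_\sigma(p)|\le\sigma^{p/2+1}/(\sigma-1)$ yields $|R/\sigma^{w-1}|\le\tfrac{2w\sigma^{-1/2}}{(\sigma-1)(1-\sigma^{-1/2})}$, which at $\sigma=2$ is already about $4.8\,w$, i.e.\ $C\approx 19$, so the asserted conclusion ``effective coefficient $=(1-1/\sigma)+O(1/\sigma^2)>0$'' does not follow. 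The culprit is not $p=5,6$, which you do handle exactly, but $p=8$ and $p=10$: for instance $\phi_\sigma(8)=\sigma^4$ gives $-c_8\phi_\sigma(8)=-(w-14)\sigma^{w-3}+(w-16)\sigma^{w-4}$, putting $\Theta(w)$ mass at $\sigma^{w-3}$, which your crude geometric tail overestimates by roughly an order of magnitude and in any case cannot incorporate with the right sign.

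The paper avoids this by grouping the ``exact, signed'' part by the exponent deficit $m$ (where $\prim_\sigma(p)$ contains $\pm\sigma^{p-m}$) rather than by the period $p$. Treating $m\in\{2,3,4,5\}$ exactly automatically pulls $p\in\{3,4,5,6,8,10\}$ into the signed computation, so the coefficients at $\sigma^{w-2},\ldots,\sigma^{w-5}$ come out as $w-4$, $20-2w$, $3w-30$, $-8$, and the $w$-parts $(1,-2,3,0)$ cancel down to $y_1=\sigma^{-1}-2\sigma^{-2}+3\sigma^{-3}$. For the tail $m\ge 6$, the observation that at most $m/2$ values of $p$ contribute at each $m$ gives $|Z|\le zw\sigma^{w-1}$ with $z=\tfrac{6\sigma^{-2}-5\sigma^{-3}}{2(\sigma-1)^2}$, and then the crucial inequality is $y_1+z<1$ ($=13/16$ at $\sigma=2$). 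Your $p$-indexed grouping loses both the sign cancellation at $p=8,10$ and the per-$m$ counting bound, and without them the margin at $\sigma=2$ disappears. To repair the argument you would essentially have to treat $m\le 5$ (not just $p\le 6$) exactly, which is the paper's proof.
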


\begin{proof}
Assume $w\ge 11$ and denote $\tilde{\Delta}_\sigma(w)=(w+1)(w+2)\Delta_\sigma(w)$ for convenience.
By Lemma~\ref{l:delta_dev} and formula \eqref{e:prim}, we have $\tilde{\Delta}_\sigma(w)=\sum_{t=1}^{w+1} c_t(w)\sigma^t$, where each coefficient $c_t(w)$ is a sum of $O(w)$ numbers, each of absolute value at most $w$.
We first compute the three leading coefficients of $\tilde{\Delta}_\sigma(w)$.

The exponent $\sigma^{w+1}$ does not appear in $S_2$ \eqref{e:s2}, while in $S_1$ \eqref{e:s1} it appears with the coefficient $c_{w+1}(w)= -w+(2-w)+\cdots+(w-2)+w=0$.
Next, $\sigma^{w}$ appears in $S_2$ with the coefficient $w+(w-2)+(w-4)+\cdots+(4-w)+(2-w)=w$; in $S_1$ it appears only in the term containing $\prim_\sigma(2)=\sigma^2-\sigma$, with the coefficient $2-w$.
Hence $c_w(w)=2$.
Finally, the exponent $\sigma^{w-1}$ appears once in $S_2$ (in the term of $\prim_\sigma(2)$ with the coefficient $w-4$) and twice in $S_1$ (in the terms of $\prim_\sigma(4)$ and $\prim_\sigma(3)$, with the coefficients $6-w$ and $4-w$ respectively).
Hence $c_{w-1}(w)=6-w$.
Therefore, the three leading terms in $\tilde{\Delta}_\sigma(w)$ sum up to $X=(2\sigma+6-w)\sigma^{w-1}$; below we define $Y,Z$ so that $\tilde{\Delta}_\sigma(w)=X+Y+Z$.

All lower terms in $\tilde{\Delta}_\sigma(w)$ are due to the monomials $\pm \sigma^{p-m}$ appearing in the expansion \eqref{e:prim} of $\prim_\sigma(p)$ for certain $p>m\ge 2$.
Every such monomial contributes to the coefficients at $\sigma^{w+1-m}$ and $\sigma^{w-m}$ (see \eqref{e:s1s2_wrapped}).
If $\prim_\sigma(p)$ contains $\pm \sigma^{p-m}$, then $p=(p-m)d$ for a square-free $d$.
Hence $p=\frac{md}{d-1}$, implying that $d-1$ divides $m$ and the maximum value of $p$ is $2m$.
In particular, $p=3,4$ for $m=2$; $p=6$ for $m=3$; $p=5,6,8$ for $m=4$; and $p=6,10$ for $m=5$.
From \eqref{e:s1}, \eqref{e:s2} (or from \eqref{e:s1s2_wrapped}) we see that the monomials $\pm \sigma^{p-m}$ with $m\in\{2,3,4,5\}$ contribute $Y=(w-4)\sigma^{w-2}+(20-2w)\sigma^{w-3}+(3w-30)\sigma^{w-4}-8\sigma^{w-5}$ to $\tilde{\Delta}_\sigma(w)$ (the contribution of $-\sigma^{p-2}$ to the coefficient of $\sigma^{w-1}$ is included into $X$). 

We rewrite $Y=(y_1w-y_2)\sigma^{w-1}$, where $y_1=\frac{1}{\sigma}-\frac{2}{\sigma^2}+\frac{3}{\sigma^3}>0$ and $y_2=\frac{4}{\sigma}-\frac{20}{\sigma^2}+\frac{30}{\sigma^3}+\frac{8}{\sigma^4}>0$.
As $w\ge11$, we have $(y_1-\frac{y_2}{11})w\sigma^{w-1}\le Y<y_1w\sigma^{w-1}$.

Let $Z$ be the sum of terms in $\tilde{\Delta}_\sigma(w)$ arising from monomials $\pm \sigma^{p-m}$ in \eqref{e:prim} with $m\ge 6$.
Then $\tilde{\Delta}_\sigma(w)=X+Y+Z$ as desired.
As mentioned above, for a fixed $m$, the values of $p$ satisfy $p=\frac{md}{d-1}$, where $d$ is square-free.
So either $p=m+1$ or $d-1\le \frac{m}{2}$ and $d-1\ne 3$.
Hence the number of options for $p$ is at most $\frac{m}{2}$.
The coefficient for $\sigma^{p-m}$ for one term of \eqref{e:s1s2_wrapped} is between $-w$ and $w$.
Therefore, we bound the absolute value of $Z$ as $|Z|\le \sum_{m=6}^\infty \frac{m}{2}w\sigma^{w+1-m}$.
Factoring $\frac{w\sigma^w}{2}$ out and substituting $x=\sigma^{-1}, t=6$ into the textbook formula $\sum_{m=t}^\infty mx^{m-1}=\frac{tx^{t-1}-(t-1)x^t}{(1-x)^2}$, we obtain $|Z|\le zw\sigma^{w-1}$, where $z=\frac{6\sigma^{-2}-5\sigma^{-3}}{2(\sigma-1)^2}$.

For any $\sigma\ge 2$, one can easily check that $y_1+z<1$ and $y_1-\frac{y_2}{11}-z>-\infty$.
Then $\tilde{\Delta}_\sigma(w)=X+Y+Z$ is between $(2\sigma+6-C_\sigma w)\sigma^{w-1}$ and $(2\sigma+6-C_\sigma'w)\sigma^{w-1}$ for some constants $C_\sigma, C_\sigma'>0$.
The lemma follows from definition of $\tilde{\Delta}_\sigma(w)$. \qed
\end{proof}

Now we describe the main features of the ``horizontal'' behaviour of the density $\R_\sigma(k,w)$.
As we compare the values for different $w$, it is convenient to formulate the result in terms of density factor $\F_\sigma(k,w)=(w+1)\R_\sigma(k,w)$.

\begin{theorem} \label{t:horiz} 
    For every $\sigma\ge 2$ there exist integers $w_\sigma,w_\sigma'\ge 11$ such that
    \begin{enumerate}[(i)]
        \item $\F_\sigma(k,w)>2$ if $2\le w\le \min\{w_\sigma,k\}$; \label{it:wsmall}
        \item $\F_\sigma(k,w)<2$ if $w_\sigma'\le w\le k$; in this case $2-\F_\sigma(k,w)=\Theta(\frac{1}{w\sigma^k})$ . \label{it:wbig}
    \end{enumerate}
\end{theorem}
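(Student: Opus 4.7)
The plan is to reduce both parts to the identity
\[
\F_\sigma(k,w)-2 \;=\; \frac{(w+1)\,\dev_\sigma(w)}{\sigma^{w+k}},
\]
which follows from Theorem~\ref{t:Rformula}; by Proposition~\ref{p:dev} the numerator does not depend on $k$, so the sign of $\F_\sigma(k,w)-2$ matches that of $\dev_\sigma(w)$ and its magnitude equals $(w+1)|\dev_\sigma(w)|\sigma^{-w-k}$. Writing $\dev_\sigma(w)=\dev_\sigma(1)+\sum_{j=1}^{w-1}\Delta_\sigma(j)$ reduces the problem to tracking a telescoping sum of increments, for which Lemmas~\ref{l:delta_smallw} and~\ref{l:delta_bigw} provide sharp sign and size information.

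For~\eqref{it:wsmall}, I would first observe that $\dev_\sigma(1)=0$: the only $(k+1)$-strings in $\rep_{\sigma,k,1}$ are the $\sigma$ constants $a^{k+1}$, each having a single distinct $k$-mer and therefore $P_{\sigma,k}=1=\tfrac{2}{w+1}$, so the two contributions to $\dev_\sigma(1)$ cancel exactly. Combined with the positivity of $\Delta_\sigma(1),\ldots,\Delta_\sigma(10)$ from Lemma~\ref{l:delta_smallw}, telescoping yields $\dev_\sigma(w)>0$ for every $2\le w\le 11$. Lemma~\ref{l:delta_bigw} then forces $\Delta_\sigma(j)$ to be eventually very negative, so the set $\{w:\dev_\sigma(w)>0\}$ is finite; taking $w_\sigma$ to be its maximum gives $w_\sigma\ge 11$ and the required inequality.

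For~\eqref{it:wbig}, Lemma~\ref{l:delta_bigw} gives $|\Delta_\sigma(j)|=\Theta(\sigma^{j-1}/j)$ with negative sign for all $j$ above an explicit threshold $w^\ast$ (namely $w^\ast\approx(2\sigma+6)/C_\sigma'$). Since $\sigma^{j-1}/j$ grows geometrically with ratio tending to $\sigma\ge 2$, the partial sum $\sum_{j=w^\ast}^{w-1}\Delta_\sigma(j)$ is dominated by its terminal term, yielding $\dev_\sigma(w)=-\Theta(\sigma^{w-1}/w)$ for $w$ large; the head $\sum_{j<w^\ast}\Delta_\sigma(j)$ is a constant independent of $w$ and is absorbed by the tail. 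Taking $w'_\sigma$ to be the smallest integer from which $\dev_\sigma(w)<0$ persists and substituting the $\Theta(\sigma^{w-1}/w)$ estimate into the identity of the first paragraph yields the stated order of $2-\F_\sigma(k,w)$.

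The main obstacle is the matching lower bound for the $\Theta$-estimate in~\eqref{it:wbig}: the positive head accumulated for $j<w^\ast$ could in principle cancel a significant portion of the negative tail. The resolution uses the geometric growth of $|\Delta_\sigma(j)|$ by a factor tending to $\sigma$: the single term $|\Delta_\sigma(w-1)|$ is already of order $\sigma^{w-1}/w$, while everything preceding $w^\ast$ is a $w$-independent constant and hence negligible for $w$ large. A careful bookkeeping of the geometric series, invoking both sides of Lemma~\ref{l:delta_bigw}, simultaneously delivers the $O$ and $\Omega$ bounds at the same exponential rate and completes the proof.
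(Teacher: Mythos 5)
Your proposal matches the paper's own proof: reduce the sign and magnitude of $\F_\sigma(k,w)-2$ to those of $\dev_\sigma(w)$ via Theorem~\ref{t:Rformula}, use $\dev_\sigma(1)=0$ together with Lemma~\ref{l:delta_smallw} to get positivity through $w=11$, and invoke Lemma~\ref{l:delta_bigw} for the eventual sign change and the $\Theta$-asymptotics of $\Delta_\sigma$. Your geometric-domination argument for passing the $\Theta(\sigma^w/w)$ bound from $\Delta_\sigma$ to $\dev_\sigma$ makes explicit a step the paper only asserts (``the same bound works for $\dev_\sigma(w)$''), and your derivation of $\dev_\sigma(1)=0$ by counting $\rep_{\sigma,k,1}$ is a computational variant of the paper's observation that $\F_\sigma(k,1)=2$. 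One caveat, which you inherit from the theorem's own statement: substituting $\dev_\sigma(w)=-\Theta(\sigma^w/w)$ into $\F_\sigma(k,w)-2=(w+1)\dev_\sigma(w)/\sigma^{w+k}$ actually gives $2-\F_\sigma(k,w)=\Theta\!\left(\tfrac{w+1}{w\sigma^k}\right)=\Theta\!\left(\tfrac{1}{\sigma^k}\right)$, not the claimed $\Theta\!\left(\tfrac{1}{w\sigma^k}\right)$; the $\tfrac{1}{w\sigma^k}$ scale is the order of $\tfrac{2}{w+1}-\R_\sigma(k,w)$, before the extra $(w{+}1)$ normalization that defines the density factor.
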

\begin{proof}
    Note that if $w=1$, for any order $\rho$ we have $D_\rho=1=\tfrac{2}{w+1}$ and therefore $\F_\sigma(k,1)=2$.
    Then \eqref{it:wsmall} follows from Lemma~\ref{l:delta_smallw}. 
    Further, Lemma~\ref{l:delta_bigw} proves that $\Delta_\sigma(w)$ is negative starting from some value of $w$ and approaches $-\infty$ as $w$ grows; then the deviation becomes negative starting from some $w=w_\sigma'$, implying the first statement of \eqref{it:wbig}.
    Now note that $\Delta_\sigma(w)=\Theta(\frac{\sigma^w}{w})$, and hence the same bound works for $\dev_\sigma(w)$.
    Formula \eqref{e:R} implies the second statement of \eqref{it:wbig}. 
    \qed
\end{proof}

\begin{remark} \label{r:transit}
    Combining the bounds from Theorem~\ref{t:horiz} with experiments for small alphabets, we claim a stronger result for those alphabets.
    Namely, a \emph{single} constant separate the zones where $\F_\sigma(k,w)>2$ and $\F_\sigma(k,w)<2$ (see Tables~\ref{tbl:binary},~\ref{tbl:10ary}).
    However, we have no proof of this property for arbitrary alphabets.
\end{remark}

\begin{table}[!ht] 
\centering
\renewcommand{\arraystretch}{1.2} % Adjusts row height
\setlength{\tabcolsep}{3pt} % Adjusts column width
\caption{$\sigma=2$, blue cells represent $\F(k,w)>2$ and green cells are $\F(k,w)<2$. 
Moreover, for any pair $w\le k$, if $w<17$ then $\F(k,w)\ge2$ and if $w\ge 17$ then $\F(k,w)<2$.
The value in every cell is  $\log_2|\F(k,w)-2|$. }
\label{tbl:binary}
\begin{adjustbox}{width=0.97\columnwidth,center}
\begin{tabular}{c|*{15}{c}}  
    \diagbox[height=2em]{$k$}{$w$} & \textbf{2} & \textbf{3} & \textbf{4} & \textbf{5} & \textbf{\ldots} & \textbf{\ldots} & \textbf{15} & \textbf{16} & \textbf{17} & \textbf{18} & \textbf{19} & \textbf{20} & \textbf{21} & \textbf{22} & \textbf{23} \\ \hline
    \textbf{2} & \cellcolor{LightSkyBlue1}-3 &  &  &  &  &  &  &  &  &  &  &  &  &  &  \\ 
    \textbf{3} & \cellcolor{LightSkyBlue1}-4 & \cellcolor{LightSkyBlue1}-3.6 &  &  &  &  &  &  &  &  &  &  &  &  &  \\ 
    \textbf{4} & \cellcolor{LightSkyBlue1}-5 & \cellcolor{LightSkyBlue1}-4.6 & \cellcolor{LightSkyBlue1}-4.4 &  &  &  &  &  &  &  &  &  &  &  &  \\ 
    \textbf{5} & \cellcolor{LightSkyBlue1}-6 & \cellcolor{LightSkyBlue1}-5.6 & \cellcolor{LightSkyBlue1}-5.4 & \cellcolor{LightSkyBlue1}-5.6 &  &  &  &  &  &  &  &  &  &  &  \\ 
    \textbf{\vdots} & \cellcolor{LightSkyBlue1}\vdots & \cellcolor{LightSkyBlue1}\vdots &\cellcolor{LightSkyBlue1} \vdots &\cellcolor{LightSkyBlue1} \vdots &\cellcolor{LightSkyBlue1} $\ddots$ & &  &  &  &  &  &  &  &  &  \\ 
    \textbf{\vdots} & \cellcolor{LightSkyBlue1}\vdots & \cellcolor{LightSkyBlue1}\vdots & \cellcolor{LightSkyBlue1}\vdots & \cellcolor{LightSkyBlue1}\vdots & \cellcolor{LightSkyBlue1}$\ddots$ &\cellcolor{LightSkyBlue1} $\ddots$ &  &  &  &  &  &  &  &  &  \\ 
    \textbf{15} & \cellcolor{LightSkyBlue1}-16 & \cellcolor{LightSkyBlue1}-15.6 & \cellcolor{LightSkyBlue1}-15.4 & \cellcolor{LightSkyBlue1}-15.6 & \cellcolor{LightSkyBlue1}\ldots &\cellcolor{LightSkyBlue1} \ldots & \cellcolor{LightSkyBlue1}-19.2 &  &  &  &  &  &  &  &  \\ 
    \textbf{16} & \cellcolor{LightSkyBlue1}-17 & \cellcolor{LightSkyBlue1}-16.6 & \cellcolor{LightSkyBlue1}-16.4 & \cellcolor{LightSkyBlue1}-16.6 & \cellcolor{LightSkyBlue1}\ldots & \cellcolor{LightSkyBlue1}\ldots & \cellcolor{LightSkyBlue1}-20.2 & \cellcolor{LightSkyBlue1}-21.3 &  &  &  &  &  &  &  \\ 
    \textbf{17} & \cellcolor{LightSkyBlue1}-18 & \cellcolor{LightSkyBlue1}-17.6 & \cellcolor{LightSkyBlue1}-17.4 & \cellcolor{LightSkyBlue1}-17.6 & \cellcolor{LightSkyBlue1}\ldots & \cellcolor{LightSkyBlue1}\ldots & \cellcolor{LightSkyBlue1}-21.2 & \cellcolor{LightSkyBlue1}-22.3 & \cellcolor{DarkSeaGreen2}-25.5 &  &  &  &  &  &  \\ 
    \textbf{18} & \cellcolor{LightSkyBlue1}-19 & \cellcolor{LightSkyBlue1}-18.6 & \cellcolor{LightSkyBlue1}-18.4 & \cellcolor{LightSkyBlue1}-18.6 & \cellcolor{LightSkyBlue1}\ldots & \cellcolor{LightSkyBlue1}\ldots & \cellcolor{LightSkyBlue1}-22.2 & \cellcolor{LightSkyBlue1}-23.3 & \cellcolor{DarkSeaGreen2}-26.5 & \cellcolor{DarkSeaGreen2}-23.3 &  &  &  &  &  \\ 
    \textbf{19} & \cellcolor{LightSkyBlue1}-20 & \cellcolor{LightSkyBlue1}-19.6 & \cellcolor{LightSkyBlue1}-19.4 & \cellcolor{LightSkyBlue1}-19.6 & \cellcolor{LightSkyBlue1}\ldots & \cellcolor{LightSkyBlue1}\ldots & \cellcolor{LightSkyBlue1}-23.2 & \cellcolor{LightSkyBlue1}-24.3 & \cellcolor{DarkSeaGreen2}-27.5 & \cellcolor{DarkSeaGreen2}-24.3 & \cellcolor{DarkSeaGreen2}-23.4 &  &  &  &  \\ 
    \textbf{20} & \cellcolor{LightSkyBlue1}-21 & \cellcolor{LightSkyBlue1}-20.6 & \cellcolor{LightSkyBlue1}-20.4 & \cellcolor{LightSkyBlue1}-20.6 & \cellcolor{LightSkyBlue1}\ldots & \cellcolor{LightSkyBlue1}\ldots & \cellcolor{LightSkyBlue1}-24.2 & \cellcolor{LightSkyBlue1}-25.3 & \cellcolor{DarkSeaGreen2}-28.5 & \cellcolor{DarkSeaGreen2}-25.3 & \cellcolor{DarkSeaGreen2}-24.4 & \cellcolor{DarkSeaGreen2}-23.9 &  &  &  \\ 
    \textbf{21} & \cellcolor{LightSkyBlue1}-22 & \cellcolor{LightSkyBlue1}-21.6 & \cellcolor{LightSkyBlue1}-21.4 & \cellcolor{LightSkyBlue1}-21.6 & \cellcolor{LightSkyBlue1}\ldots & \cellcolor{LightSkyBlue1}\ldots & \cellcolor{LightSkyBlue1}-25.2 & \cellcolor{LightSkyBlue1}-26.3 & \cellcolor{DarkSeaGreen2}-29.5 & \cellcolor{DarkSeaGreen2}-26.3 & \cellcolor{DarkSeaGreen2}-25.4 & \cellcolor{DarkSeaGreen2}-24.9 & \cellcolor{DarkSeaGreen2}-24.6 &  &  \\ 
    \textbf{22} & \cellcolor{LightSkyBlue1}-23 & \cellcolor{LightSkyBlue1}-22.6 & \cellcolor{LightSkyBlue1}-22.4 & \cellcolor{LightSkyBlue1}-22.6 & \cellcolor{LightSkyBlue1}\ldots & \cellcolor{LightSkyBlue1}\ldots & \cellcolor{LightSkyBlue1}-26.2 & \cellcolor{LightSkyBlue1}-27.3 & \cellcolor{DarkSeaGreen2}-30.5 & \cellcolor{DarkSeaGreen2}-27.3 & \cellcolor{DarkSeaGreen2}-26.4 & \cellcolor{DarkSeaGreen2}-25.9 & \cellcolor{DarkSeaGreen2}-25.6 & \cellcolor{DarkSeaGreen2}-25.4 &  \\ 
    \textbf{23} & \cellcolor{LightSkyBlue1}-24 & \cellcolor{LightSkyBlue1}-23.6 & \cellcolor{LightSkyBlue1}-23.4 & \cellcolor{LightSkyBlue1}-23.6 & \cellcolor{LightSkyBlue1}\ldots & \cellcolor{LightSkyBlue1}\ldots & \cellcolor{LightSkyBlue1}-27.2 & \cellcolor{LightSkyBlue1}-28.3 & \cellcolor{DarkSeaGreen2}-31.5 & \cellcolor{DarkSeaGreen2}-28.3 & \cellcolor{DarkSeaGreen2}-27.4 & \cellcolor{DarkSeaGreen2}-26.9 & \cellcolor{DarkSeaGreen2}-26.6 & \cellcolor{DarkSeaGreen2}-26.4 & \cellcolor{DarkSeaGreen2}-26.2 \\ 
\end{tabular}
\end{adjustbox}
\end{table}

\begin{table}[!bh]
\renewcommand{\arraystretch}{1.2} % Adjusts row height
\caption{$\sigma=10$, blue cells represent $\F(k,w)>2$ and green cells are $\F(k,w)<2$. 
Moreover, for any pair $w\le k$, if $w<30$ then $\F(k,w)\ge2$ and if $w\ge 30$ then $\F(k,w)<2$.
The value in every cell is  $\log_{10}|\F(k,w)-2|$. }
\label{tbl:10ary}
\begin{adjustbox}{width=0.97\columnwidth,center}
\begin{tabular}{c|*{16}{c}}
    \diagbox[height=2em]{$k$}{$w$} & \textbf{2} & \textbf{3} & \textbf{4} & \textbf{5} & \textbf{\ldots} & \textbf{\ldots} & \textbf{28} & \textbf{29} & \textbf{30} & \textbf{31} & \textbf{32} & \textbf{33} & \textbf{34} & \textbf{35} & \textbf{36} & \textbf{37} \\ \hline
    \textbf{2} & \cellcolor{LightSkyBlue1}-3 &  &  &  &  &  &  &  &  &  &  &  &  &  &  &  \\ 
    \textbf{3} & \cellcolor{LightSkyBlue1}-4 & \cellcolor{LightSkyBlue1}-4.1 &  &  &  &  &  &  &  &  &  &  &  &  &  &  \\ 
    \textbf{4} & \cellcolor{LightSkyBlue1}-5 & \cellcolor{LightSkyBlue1}-5.1 & \cellcolor{LightSkyBlue1}-5.2 &  &  &  &  &  &  &  &  &  &  &  &  &  \\ 
    \textbf{5} & \cellcolor{LightSkyBlue1}-6 & \cellcolor{LightSkyBlue1}-6.1 & \cellcolor{LightSkyBlue1}-6.2 & \cellcolor{LightSkyBlue1}-6.3 &  &  &  &  &  &  &  &  &  &  &  &  \\ 
    \textbf{\vdots} & \cellcolor{LightSkyBlue1}\vdots & \cellcolor{LightSkyBlue1}\vdots & \cellcolor{LightSkyBlue1}\vdots & \cellcolor{LightSkyBlue1}\vdots & \cellcolor{LightSkyBlue1}$\ddots$ &  &  &  &  &  &  &  &  &  &  &  \\ 
    \textbf{\vdots} & \cellcolor{LightSkyBlue1}\vdots & \cellcolor{LightSkyBlue1}\vdots & \cellcolor{LightSkyBlue1}\vdots & \cellcolor{LightSkyBlue1}\vdots & \cellcolor{LightSkyBlue1}$\ddots$ &\cellcolor{LightSkyBlue1} $\ddots$ &  &  &  &  &  &  &  &  &  &  \\ 
    \textbf{28} & \cellcolor{LightSkyBlue1}-29 & \cellcolor{LightSkyBlue1}-29.1 & \cellcolor{LightSkyBlue1}-29.2 & \cellcolor{LightSkyBlue1}-29.3 & \cellcolor{LightSkyBlue1}\ldots & \cellcolor{LightSkyBlue1}\ldots & \cellcolor{LightSkyBlue1}-31.3 &  &  &  &  &  &  &  &  &  \\ 
    \textbf{29} & \cellcolor{LightSkyBlue1}-30 & \cellcolor{LightSkyBlue1}-30.1 & \cellcolor{LightSkyBlue1}-30.2 & \cellcolor{LightSkyBlue1}-30.3 & \cellcolor{LightSkyBlue1}\ldots & \cellcolor{LightSkyBlue1}\ldots & \cellcolor{LightSkyBlue1}-32.3 & \cellcolor{LightSkyBlue1}-33.1 &  &  &  &  &  &  &  &  \\ 
    \textbf{30} & \cellcolor{LightSkyBlue1}-31 & \cellcolor{LightSkyBlue1}-31.1 & \cellcolor{LightSkyBlue1}-31.2 & \cellcolor{LightSkyBlue1}-31.3 & \cellcolor{LightSkyBlue1}\ldots & \cellcolor{LightSkyBlue1}\ldots & \cellcolor{LightSkyBlue1}-33.3 & \cellcolor{LightSkyBlue1}-34.1 & \cellcolor{DarkSeaGreen2}-33.6 &  &  &  &  &  &  &  \\ 
    \textbf{31} & \cellcolor{LightSkyBlue1}-32 & \cellcolor{LightSkyBlue1}-32.1 & \cellcolor{LightSkyBlue1}-32.2 & \cellcolor{LightSkyBlue1}-32.3 & \cellcolor{LightSkyBlue1}\ldots & \cellcolor{LightSkyBlue1}\ldots & \cellcolor{LightSkyBlue1}-34.3 & \cellcolor{LightSkyBlue1}-35.1 & \cellcolor{DarkSeaGreen2}-34.6 & \cellcolor{DarkSeaGreen2}-34.2 &  &  &  &  &  &  \\ 
    \textbf{32} & \cellcolor{LightSkyBlue1}-33 & \cellcolor{LightSkyBlue1}-33.1 & \cellcolor{LightSkyBlue1}-33.2 & \cellcolor{LightSkyBlue1}-33.3 & \cellcolor{LightSkyBlue1}\ldots & \cellcolor{LightSkyBlue1}\ldots & \cellcolor{LightSkyBlue1}-35.3 & \cellcolor{LightSkyBlue1}-36.1 & \cellcolor{DarkSeaGreen2}-35.6 & \cellcolor{DarkSeaGreen2}-35.2 & \cellcolor{DarkSeaGreen2}-35.0 &  &  &  &  &  \\ 
    \textbf{33} & \cellcolor{LightSkyBlue1}-34 & \cellcolor{LightSkyBlue1}-34.1 & \cellcolor{LightSkyBlue1}-34.2 & \cellcolor{LightSkyBlue1}-34.3 & \cellcolor{LightSkyBlue1}\ldots & \cellcolor{LightSkyBlue1}\ldots & \cellcolor{LightSkyBlue1}-36.3 & \cellcolor{LightSkyBlue1}-37.1 & \cellcolor{DarkSeaGreen2}-36.6 & \cellcolor{DarkSeaGreen2}-36.2 & \cellcolor{DarkSeaGreen2}-36.0 & \cellcolor{DarkSeaGreen2}-35.9 &  &  &  &  \\ 
    \textbf{34} & \cellcolor{LightSkyBlue1}-35 & \cellcolor{LightSkyBlue1}-35.1 & \cellcolor{LightSkyBlue1}-35.2 & \cellcolor{LightSkyBlue1}-35.3 & \cellcolor{LightSkyBlue1}\ldots & \cellcolor{LightSkyBlue1}\ldots & \cellcolor{LightSkyBlue1}-37.3 & \cellcolor{LightSkyBlue1}-38.1 & \cellcolor{DarkSeaGreen2}-37.6 & \cellcolor{DarkSeaGreen2}-37.2 & \cellcolor{DarkSeaGreen2}-37.0 & \cellcolor{DarkSeaGreen2}-36.9 & \cellcolor{DarkSeaGreen2}-36.8 &  &  &  \\ 
    \textbf{35} & \cellcolor{LightSkyBlue1}-36 & \cellcolor{LightSkyBlue1}-36.1 & \cellcolor{LightSkyBlue1}-36.2 & \cellcolor{LightSkyBlue1}-36.3 & \cellcolor{LightSkyBlue1}\ldots & \cellcolor{LightSkyBlue1}\ldots & \cellcolor{LightSkyBlue1}-38.3 & \cellcolor{LightSkyBlue1}-39.1 & \cellcolor{DarkSeaGreen2}-38.6 & \cellcolor{DarkSeaGreen2}-38.2 & \cellcolor{DarkSeaGreen2}-38.0 & \cellcolor{DarkSeaGreen2}-37.9 & \cellcolor{DarkSeaGreen2}-37.8 & \cellcolor{DarkSeaGreen2}-37.8 &  &  \\ 
    \textbf{36} & \cellcolor{LightSkyBlue1}-37 & \cellcolor{LightSkyBlue1}-37.1 & \cellcolor{LightSkyBlue1}-37.2 & \cellcolor{LightSkyBlue1}-37.3 & \cellcolor{LightSkyBlue1}\ldots & \cellcolor{LightSkyBlue1}\ldots & \cellcolor{LightSkyBlue1}-39.3 & \cellcolor{LightSkyBlue1}-40.1 & \cellcolor{DarkSeaGreen2}-39.6 & \cellcolor{DarkSeaGreen2}-39.2 & \cellcolor{DarkSeaGreen2}-39.0 & \cellcolor{DarkSeaGreen2}-38.9 & \cellcolor{DarkSeaGreen2}-38.8 & \cellcolor{DarkSeaGreen2}-38.8 & \cellcolor{DarkSeaGreen2}-38.7 &  \\ 
    \textbf{37} & \cellcolor{LightSkyBlue1}-38 & \cellcolor{LightSkyBlue1}-38.1 & \cellcolor{LightSkyBlue1}-38.2 & \cellcolor{LightSkyBlue1}-38.3 & \cellcolor{LightSkyBlue1}\ldots & \cellcolor{LightSkyBlue1}\ldots & \cellcolor{LightSkyBlue1}-40.3 & \cellcolor{LightSkyBlue1}-41.1 & \cellcolor{DarkSeaGreen2}-40.6 & \cellcolor{DarkSeaGreen2}-40.2 & \cellcolor{DarkSeaGreen2}-40.0 & \cellcolor{DarkSeaGreen2}-39.9 & \cellcolor{DarkSeaGreen2}-39.8 & \cellcolor{DarkSeaGreen2}-39.8 & \cellcolor{DarkSeaGreen2}-39.7 & \cellcolor{DarkSeaGreen2}-39.7 \\ 
\end{tabular}
\end{adjustbox}
\end{table}

\section{The Case $w>k$}
\label{s:bigw}

A straightforward lower bound for $\R_\sigma(k,w)$ (and for the expected density of \emph{any} particular order) is $\sigma^{-k}$: every position of the minimal $k$-mer is marked, and the expected density of such positions in a random string is $\sigma^{-k}$.
If $w$ is very big compared to $k$, then almost all windows contain the minimal  $k$-mer; thus,  $\R_\sigma(k,w)$ approaches $\sigma^{-k}$.
The next proposition clarifies what is ``very big'' in this context.

\begin{proposition}\label{p:bigw}
Let $N=\sigma^k$ and let $w=\frac{\sigma}{\sigma-1}N(\ln N+g(N))$ for arbitrary fixed positive function $g$.
Then $\R_\sigma(k,w)=(1+O(e^{-g(N)}))\sigma^{-k}$.
\end{proposition}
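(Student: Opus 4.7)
The plan is a sharp two-sided estimate. The lower bound $\R_\sigma(k,w)\ge\sigma^{-k}$ is immediate: for any order $\rho$, every occurrence of the $\rho$-minimum $k$-mer in a random infinite string is marked (it is the minimum of every window containing it), giving $D_\rho\ge\sigma^{-k}$ for each $\rho$, and averaging preserves this. For the upper bound, I would use Lemma~\ref{l:randens} to write $\R_\sigma(k,w)=E_v[P_{\sigma,k}(v)]$ over a uniform $(w+k)$-string $v$, and split on the event $A$ that $v$ contains all $N=\sigma^k$ distinct $k$-mers. By Lemma~\ref{l:probgc}, under $A$ we have $P_{\sigma,k}(v)\in\{1/N,2/N\}$ with the value $2/N$ occurring exactly when the $k$-suffix is unique in $v$; under $\bar A$ we have $P_{\sigma,k}(v)\le 2/\kmers(v)$.

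The heart of the argument is a classical pattern-avoidance tail bound: for every $\sigma$-ary $k$-mer $u$ and every $n$, the probability that $u$ does not occur in a uniform length-$n$ string is at most $C_\sigma\,e^{-n(\sigma-1)/(\sigma N)}$, where the exponent is sharp at $u=0^k$ and reflects the worst-case expected waiting time $\tfrac{\sigma}{\sigma-1}N$. This can be obtained from the Guibas--Odlyzko autocorrelation generating function or by a renewal-style subadditivity argument. Substituting $n=w+k$ with the hypothesised $w$ yields $\Pr[u\text{ missing from }v]=O(e^{-g(N)}/N)$, so a union bound over the $N$ choices of $u$ gives $\Pr[\bar A]=O(e^{-g(N)})$ and $E[N-\kmers(v)]=O(e^{-g(N)})$; the same bound, applied to the random $k$-suffix conditional on its value, gives $\Pr[\text{suffix unique in }v]=O(e^{-g(N)}/N)$.

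With these estimates, the $A$-contribution to $\R_\sigma(k,w)$ is at most $(1+\Pr[\text{suffix unique}])/N=(1+O(e^{-g(N)}/N))/N$. For the $\bar A$-contribution, I would split further by whether $\kmers(v)\ge N/2$: in the first range $P_{\sigma,k}(v)\le 4/N$, so the contribution is at most $(4/N)\Pr[\bar A]=O(e^{-g(N)}/N)$; in the second range I bound $P_{\sigma,k}(v)\le 2$ and apply Markov to $N-\kmers(v)$, obtaining $\Pr[\kmers(v)<N/2]\le 2E[N-\kmers(v)]/N=O(e^{-g(N)}/N)$ and hence contribution $O(e^{-g(N)}/N)$. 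Summing everything yields $\R_\sigma(k,w)\le(1+O(e^{-g(N)}))/N$, which combined with the lower bound gives the claim.

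The principal obstacle is the pattern-avoidance bound in its precise exponential form. Coarser tools -- Markov on the number of occurrences, or the partition of $v$ into non-overlapping $k$-blocks -- yield respectively only $O(1/\log N)$ or an exponent off by a factor of $k$; the latter is too lossy for $\sigma>2$, because the constant $\tfrac{\sigma}{\sigma-1}$ in the hypothesis on $w$ exactly matches the worst-case scaled waiting time and leaves no room for slack.
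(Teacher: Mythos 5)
Your proof is correct, but it takes a genuinely different route from the paper's. The paper never passes through $E_v[P_{\sigma,k}(v)]$ for this proposition; instead it uses a direct charging argument on the markup: every marked position is either the start of the $\rho$-minimum $k$-mer (density exactly $\sigma^{-k}$), or else the first window in which that position is selected contains no occurrence of the $\rho$-minimum, and distinct such positions map to distinct windows. This yields in one line
$\R_\sigma(k,w)\le \sigma^{-k}+\frac{A_{\sigma,k}(w+k-1)}{\sigma^{w+k-1}}$,
where $A_{\sigma,k}(n)$ is the worst-case count of length-$n$ strings avoiding a fixed $k$-mer, and then the single avoidance estimate $A_{\sigma,k}(n)\le(1+\tfrac{k}{\sigma^k})(\sigma-\tfrac{\sigma-1}{\sigma^k})^n$ (from Guibas--Odlyzko via~\cite{RuSh16}) finishes the computation. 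Your route instead stays entirely inside the $P_{\sigma,k}$ formalism of Lemmas~\ref{l:probgc} and \ref{l:randens}: you split on whether $v$ realizes all $N$ $k$-mers, on the uniqueness of the $k$-suffix, and on $\kmers(v)\ge N/2$, and you invoke the avoidance bound twice (once for $\Pr[\bar A]$ and $E[N-\kmers(v)]$ via a union bound, once for suffix uniqueness) plus Markov. Both arguments hinge on the same exponential avoidance bound, whose sharp constant $\tfrac{\sigma}{\sigma-1}$ is exactly what is needed, as you correctly emphasize. The paper's charging argument is shorter and avoids the case analysis, but it requires re-reasoning directly about markup; your version is mechanically heavier (three-way split, union bound, Markov) but is arguably the more natural continuation of the paper's machinery. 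Both are complete and correct.
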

\begin{proof}
    Let $A_{\sigma,u}(n)$ be the number of $\sigma$-ary $n$-strings having no occurrence of the $k$-mer $u$ and let $A_{\sigma,k}(n)=\max\{A_{\sigma,u}(n)\mid u \text{ is a } k\text{-mer}\}$.
    Since some $k$-mer should be marked in each window having no occurrence of the minimal $k$-mer, we get the upper bound $\R_\sigma(k,w)\le \frac{1}{\sigma^k}+\frac{A_{\sigma,k}(w+k-1)}{\sigma^{w+k-1}}$.
    The function $A_{\sigma,k}(n)$ can be estimated by the method of Guibas and Odlyzko \cite{GuOd78, GuOd81}; we use the bound based on \cite[Sect.~4]{RuSh16}: $A_{\sigma,k}(n)\le (1+\frac{k}{\sigma^k})(\sigma-\frac{\sigma-1}{\sigma^k})^n$.
    Then we have
    \begin{equation*}
        \tfrac{A_{\sigma,k}(w+k-1)}{\sigma^{w+k-1}} \le (1+\tfrac{k}{\sigma^k})(1-\tfrac{\sigma-1}{\sigma^{k+1}})^{w+k-1} < 
        (1+\tfrac{k}{\sigma^k})e^{-\frac{\sigma-1}{\sigma^{k+1}}(w+k-1)}\,.
    \end{equation*}
    Since $(1{+}\tfrac{k}{\sigma^k})e^{-\frac{(\sigma-1)(k-1)}{\sigma^{k+1}}}\!\!=O(1)$, by substituting $w=\frac{\sigma}{\sigma-1}N(\ln N{+}g(N))$ we get
    $$
    \hspace*{5mm} \R_\sigma(k,w)\le \tfrac{1}{\sigma^k}+ O(e^{-\frac{\sigma-1}{\sigma^{k+1}}\cdot\frac{\sigma^{k+1}}{\sigma-1}(k\ln\sigma{+}g(N))}) = \tfrac{1}{\sigma^k}+ O(\tfrac{e^{-g(N)}}{\sigma^k}).\hspace*{5mm} \qed
    $$
\end{proof}

\section{Discussion and Future Work}
\label{s:disc}

Random minimizer is an object interesting for both theory and practice.
Its main characteristic is the density $\R_\sigma(k,w)$ studied in this paper.
We provide a detailed description of density for the case $w\le k$; the only remaining point of interest is to prove that for every $\sigma$ the density passes the limit value $\frac{2}{w+1}$ only once (see Remark~\ref{r:transit} and Tables~\ref{tbl:binary},~\ref{tbl:10ary}).

The case $w>k$ presents more open problems, which can can be easily seen if we plot $\R_\sigma(k,w)$ as a function of $w$ for some $\sigma$ and $k$ (see Figure~\ref{fig:s}).
We know approximate values of this function for small $w$ (see \eqref{e:dev} and Theorem~\ref{t:horiz}) and for very big $w$ (Proposition~\ref{p:bigw}), but to fill the intermediate range is an open problem.
The following simple lemma shows that $\R_\sigma(k,w)$ is monotone in $w$.

\begin{lemma}\label{l:Rmonotone}
    $\R_\sigma(k,w+1)\le\R_\sigma(k,w)$.
\end{lemma}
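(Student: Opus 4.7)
The plan is to derive the monotonicity from a stronger pointwise statement: for every order $\rho\in\perm(\sigma,k)$ and every string $S$, the set of positions marked by $\S_\rho$ in $S$ with window size $w{+}1$ is a subset of the set of positions marked with window size $w$. Once this set containment is in hand, dividing by $|S|$ yields $D_\rho(n;w{+}1)\le D_\rho(n;w)$ for every $n$, and passing to the limit gives $D_\rho(w{+}1)\le D_\rho(w)$ for every $\rho$. Taking the expectation over the uniformly random $\rho$ then produces $\R_\sigma(k,w{+}1)\le \R_\sigma(k,w)$, as required.

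To prove the set containment, I would argue as follows. Suppose position $i$ is marked at window size $w{+}1$: by definition of the minimizer there exists a set $W$ of $w{+}1$ consecutive $k$-mer starting positions containing $i$ such that $i$ is the leftmost $\rho$-minimum of $W$. I would then form a window $W'$ of $w$ consecutive $k$-mer positions by removing from $W$ its rightmost element unless $i$ itself is rightmost in $W$, in which case I remove the leftmost element instead. In both cases $W'\subseteq W$ and $i\in W'$; moreover, in the second case the removed leftmost position must host a $k$-mer \emph{strictly} greater than the one at $i$, otherwise that position would contradict the assumption that $i$ is the \emph{leftmost} minimum of $W$. Hence in either case $i$ is still the leftmost $\rho$-minimum in $W'$, so $i$ is marked at window size $w$.

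The argument is elementary; the only delicate point is to track the tie-breaking convention carefully in the case where one removes the leftmost position of $W$, and this is handled as above. I do not foresee any substantial obstacle.
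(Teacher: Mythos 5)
Your proposal is correct and takes essentially the same approach as the paper: both proofs show that the set of positions marked by $\S_\rho$ with window parameter $w{+}1$ is contained in the set marked with parameter $w$ (by noting that a position chosen in a window of $w{+}1$ $k$-mers is also chosen in one of its two sub-windows of $w$ $k$-mers), then pass to densities and take the expectation over $\rho$. Your explicit handling of the tie-breaking when $i$ is rightmost is a minor elaboration the paper leaves implicit, but the argument is the same.
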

\begin{proof}
    Consider an arbitrary minimizer $\S_\rho$ with parameters $k,w+1$ and an arbitrary window $u=u[1..w{+}k]$.
    When processing $u$, the scheme chooses some $k$-mer $u[i..i{+}k{-}1]$.
    Then $\S_\rho$ with parameters $k,w$ chooses the same $k$-mer $u[i..i{+}k{-}1]$ when processing the window $u[1..w{+}k{-}1]$, or $u[2..w{+}k]$, or both.
    Hence for any processed string, the set of positions marked by $\S_\rho$ with the parameters $k,w+1$ forms a subset of positions marked by $\S_\rho$ with the parameters $k,w$.
    The lemma now follows from definitions.
    \qed
\end{proof}

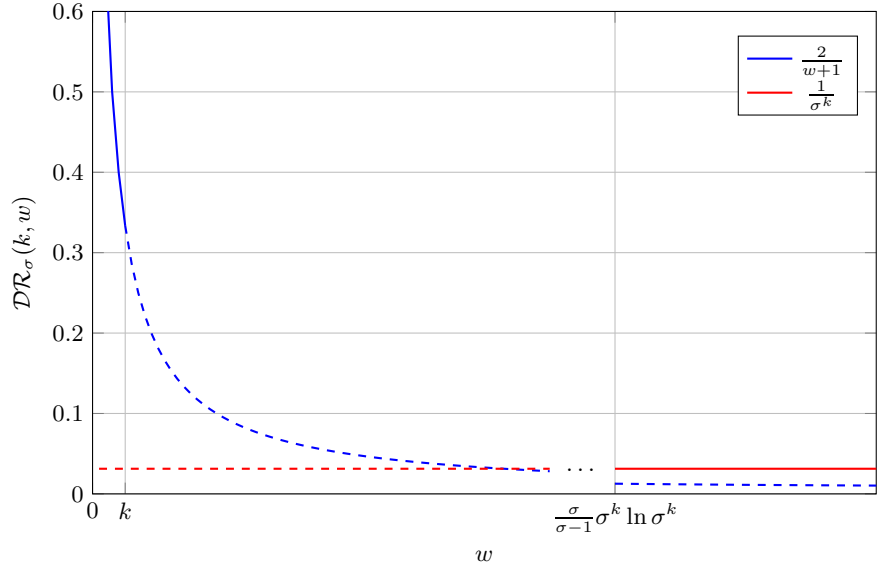
\begin{figure}[!ht]
    \centering
    % % \documentclass{article}
% \usepackage{pgfplots}
% \usepackage{amsmath}  % For mathematical symbols
% \usepackage{tikz}

% \pgfplotsset{compat=1.17} % Set compatibility

% \begin{document}

% \begin{figure}[h]
%     \centering
    \begin{tikzpicture}
        \begin{axis}[
            width=12cm, height=8cm, % Figure size
            xlabel={$w$}, ylabel={$\R_\sigma(k, w)$},
            title={Plot of approximations for $\R_\sigma(k, w)$ ($\sigma=2$, $k=5$)},
            xmin=0, xmax=120, ymin=0, ymax=0.6, % Axis limits
            xtick={0, 5, 80}, % Custom x ticks
            xticklabels={$0$, $k$, $\tfrac{\sigma}{\sigma-1}\sigma^k \ln\sigma^k$},
            grid=major,
            legend style={font=\small, at={(0.9,0.95)}, anchor=north, legend columns=1}, % Custom legend style
            ]

            % Plot the solid blue line: 2/(w+1) for 1 <= w <= 5
            \addplot[blue, thick] table {
                1 1
                2 0.666667
                3 0.5
                4 0.4
                5 0.333333
                % 5 0.285714
            };
            \addlegendentry{$\frac{2}{w+1}$}
            % Plot the solid red constant line: 1/2^5 after w=80
            \addplot[red, thick] table {
                80 0.03125
                90 0.03125
                100 0.03125
                110 0.03125
                120 0.03125
            };
            \addlegendentry{$\frac{1}{\sigma^k}$}
            % Plot the dashed blue line: 2/(w+1) for 5 < w <= 70
            \addplot[blue, thick, dashed] table {
                5 0.33333
                6 0.285714
                7 0.25
                8 0.222222
                9 0.2
                10 0.181818
                11 0.166667
                12 0.153846
                13 0.142857
                14 0.133333
                15 0.125
                16 0.117647
                17 0.111111
                18 0.105263
                19 0.1
                20 0.0952381
                21 0.0909091
                22 0.0869565
                23 0.0833333
                24 0.08
                25 0.0769231
                26 0.0740741
                27 0.0714286
                28 0.0689655
                29 0.0666667
                30 0.0645161
                31 0.0625
                32 0.0606061
                33 0.0588235
                34 0.0571429
                35 0.0555556
                36 0.0540541
                37 0.0526316
                38 0.0512821
                39 0.05
                40 0.0487805
                41 0.047619
                42 0.0465116
                43 0.0454545
                44 0.0444444
                45 0.0434783
                46 0.0425532
                47 0.0416667
                48 0.0408163
                49 0.04
                50 0.0392157
                51 0.0384615
                52 0.0377358
                53 0.037037
                54 0.0363636
                55 0.0357143
                56 0.0350877
                57 0.0344828
                58 0.0338983
                59 0.0333333
                60 0.0327869
                61 0.0322581
                62 0.031746
                63 0.03125
                64 0.0307692
                65 0.030303
                66 0.0298507
                67 0.0294118
                68 0.0289855
                69 0.0285714
                70 0.028169
            };

            % Plot the dashed red constant line: 1/2^5 for w <= 70
            \addplot[red, thick, dashed] table {
                1 0.03125
                10 0.03125
                20 0.03125
                30 0.03125
                40 0.03125
                50 0.03125
                60 0.03125
                70 0.03125
            };

            % Plot the blue dashed line for 2/(w+141) from w=80 to 120
            \addplot[blue, thick, dashed] table {
                80 0.0126582
                81 0.0125786
                82 0.0125
                83 0.0124224
                84 0.0123457
                85 0.0122699
                86 0.0121951
                87 0.0121212
                88 0.0120482
                89 0.011976
                90 0.0119048
                91 0.0118343
                92 0.0117647
                93 0.0116959
                94 0.0116279
                95 0.0115607
                96 0.0114943
                97 0.0114286
                98 0.0113636
                99 0.0112994
                100 0.01123596
                110 0.01075269
                120 0.01030928
            };

            % Add three dots between 70 and 80
            \node at (axis cs:75,0.03) {\dots};
        \end{axis}
    \end{tikzpicture}
% \end{figure}

% \end{document}  % Input the figure filewith your file name
    \caption{For $w\le k$ we have $\R_\sigma(k,w)$ very close to $\tfrac{2}{w+1}$ (by \eqref{e:R} and Theorem~\ref{t:horiz}).
    For $w>\tfrac{\sigma}{\sigma-1}\sigma^k\ln\sigma^k$ we have $\R_\sigma(k,w)$ very close to $\tfrac{1}{\sigma^k}$ (by Proposition~\ref{p:bigw}).
    There is still a gap in our knowledge in the range $k<w<\tfrac{\sigma}{\sigma-1}\sigma^k\ln\sigma^k$, where we just know that $\R_\sigma(k,w)$ is monotone decreasing (by Lemma~\ref{l:Rmonotone}).}
    \label{fig:s}
\vspace*{-3mm}
\end{figure}

In order to describe the behaviour of $\R_\sigma(k,w)$ for ``medium'' values of $w$, it is necessary to describe the ranges of $w=w(\sigma,k)$ where the density is $\frac{2+o(1)}{w}$; $\le \frac{C}{w}$ for an absolute constant $C>2$; $\le \frac{C}{\sigma^k}$ for an absolute constant $C>1$.
From \cite{ZKM20} we have the lower bound for the first range: all values $w=O(\sigma^{k/3-\varepsilon})$ are inside it.
We believe that this bound is a big underestimate.
Another result of \cite{ZKM20} implies an upper bound $w=O(\sigma^k)$ for the second range, for every $C$. 
Our conjecture for the third range is $w=\Omega(\sigma^k\cdot k\ln\sigma)$.

\subsubsection*{Acknowledgments.}
S. Golan is supported by Israel Science Foundation grant no. 810/21.
A. Shur is supported by the ERC grant MPM no. 683064 under the EU’s
Horizon 2020 Research and Innovation Programme and by the State of Israel through the Center for Absorption in Science of the Ministry of Aliyah and Immigration.

\bibliographystyle{splncs04}
\bibliography{bib}

\begin{thebibliography}{10}
\providecommand{\url}[1]{\texttt{#1}}
\providecommand{\urlprefix}{URL }
\providecommand{\doi}[1]{https://doi.org/#1}

\bibitem{CHM21}
Chikhi, R., Holub, J., Medvedev, P.: Data structures to represent a set of k-long dna sequences. ACM Comput. Surv.  \textbf{54}(1) (2021). \doi{10.1145/3445967}

\bibitem{deorowicz2015kmc}
Deorowicz, S., Kokot, M., Grabowski, S., Debudaj-Grabysz, A.: {KMC} 2: fast and resource-frugal k-mer counting. Bioinformatics  \textbf{31}(10),  1569--1576 (2015)

\bibitem{Edg21}
Edgar, R.: Syncmers are more sensitive than minimizers for selecting conserved $k$‑mers in biological sequences. PeerJ  \textbf{9},  e10805 (02 2021). \doi{10.7717/peerj.10805}

\bibitem{FeMa00}
Ferragina, P., Manzini, G.: Opportunistic data structures with applications. In: 41st Annual Symposium on Foundations of Computer Science, {FOCS} 2000. pp. 390--398. {IEEE} Computer Society (2000). \doi{10.1109/SFCS.2000.892127}

\bibitem{FW65}
Fine, N.J., Wilf, H.S.: Uniqueness theorems for periodic functions. Proceedings of the American Mathematical Society  \textbf{16}(1),  109--114 (1965)

\bibitem{GoSh24}
Golan, S., Shur, A.M.: Expected density of random minimizers (2024), \url{https://arxiv.org/abs/2410.16968}

\bibitem{Gol67}
Golomb, S.W.: Shift Register Sequences. Holden--Day (1967)

\bibitem{KP24}
{Groot Koerkamp}, R., Pibiri, G.E.: The mod-minimizer: {A} simple and efficient sampling algorithm for long k-mers. In: Pissis, S.P., Sung, W. (eds.) 24th International Workshop on Algorithms in Bioinformatics, {WABI} 2024. LIPIcs, vol.~312, pp. 11:1--11:23. Schloss Dagstuhl - Leibniz-Zentrum f{\"{u}}r Informatik (2024). \doi{10.4230/LIPICS.WABI.2024.11}

\bibitem{GuOd78}
Guibas, L.J., Odlyzko, A.M.: Maximal prefix-synchronized codes. SIAM J. Applied Math.  \textbf{35},  401--418 (1978)

\bibitem{GuOd81}
Guibas, L.J., Odlyzko, A.M.: String overlaps, pattern matching, and nontransitive games. J. Comb. Theory {A}  \textbf{30},  183--208 (1981)

\bibitem{Lem71}
Lempel, A.: On extremal factors of the de {Bruijn} graph. J. Comb. Theory {B}  \textbf{11},  17--27 (1971)

\bibitem{li2018minimap2}
Li, H.: Minimap2: pairwise alignment for nucleotide sequences. Bioinformatics  \textbf{34}(18),  3094--3100 (2018)

\bibitem{Lothaire_1997}
Lothaire, M. (ed.): Combinatorics on Words. Cambridge Mathematical Library, Cambridge University Press, 2 edn. (1997)

\bibitem{Mykk72}
Mykkeltveit, J.: A proof of {Golomb's} conjecture for the de {Bruijn} graph. J. Comb. Theory {B}  \textbf{13},  40--45 (1972)

\bibitem{RHHMY04}
Roberts, M., Hayes, W., Hunt, B.R., Mount, S.M., Yorke, J.A.: {Reducing storage requirements for biological sequence comparison}. Bioinformatics  \textbf{20}(18),  3363--3369 (07 2004). \doi{10.1093/bioinformatics/bth408}

\bibitem{RuSh16}
Rubinchik, M., Shur, A.M.: The number of distinct subpalindromes in random words. Fundam. Informaticae  \textbf{145}(3),  371--384 (2016). \doi{10.3233/FI-2016-1366}

\bibitem{Sah21}
Sahlin, K.: Effective sequence similarity detection with strobemers. Genome Research  \textbf{31}(11),  2080--2094 (2021). \doi{10.1101/gr.275648.121}

\bibitem{SBBM23}
Sahlin, K., Baudeau, T., Cazaux, B., Marchet, C.: A survey of mapping algorithms in the long-reads era. Genome Biology  \textbf{24}, ~133 (2023). \doi{0.1186/s13059-023-02972-3}

\bibitem{SWA03}
Schleimer, S., Wilkerson, D.S., Aiken, A.: Winnowing: local algorithms for document fingerprinting. In: Proceedings of the 2003 ACM SIGMOD International Conference on Management of Data. p. 76–85. SIGMOD '03, Association for Computing Machinery, New York, NY, USA (2003). \doi{10.1145/872757.872770}

\bibitem{Weiner73}
Weiner, P.: Linear pattern matching algorithms. In: 14th Annual Symposium on Switching and Automata Theory, Iowa City, Iowa, USA, October 15-17, 1973. pp. 1--11. {IEEE} Computer Society (1973). \doi{10.1109/SWAT.1973.13}

\bibitem{wood2014kraken}
Wood, D.E., Salzberg, S.L.: Kraken: ultrafast metagenomic sequence classification using exact alignments. Genome Biology  \textbf{15},  1--12 (2014)

\bibitem{ZKM20}
Zheng, H., Kingsford, C., Marçais, G.: {Improved design and analysis of practical minimizers}. Bioinformatics  \textbf{36},  i119--i127 (07 2020). \doi{10.1093/bioinformatics/btaa472}

\end{thebibliography}

\end{document}